\documentclass[reqno]{amsart}

\usepackage{amsmath, amssymb, amsthm, mathtools, enumitem}
\usepackage{accents, xcolor}
\usepackage{cancel}
\usepackage{amssymb}
\usepackage{amsthm}
\usepackage{amsfonts}
\usepackage{amsmath}
\usepackage{pmboxdraw}
\usepackage{verbatim} % for comment
\usepackage{graphicx}
\usepackage{color}
\usepackage[colorlinks=true, citecolor=blue, filecolor=black, linkcolor=black, urlcolor=black]{hyperref}
\usepackage{cite}
\usepackage[normalem]{ulem}
\usepackage{subcaption}
\usepackage{todonotes}
\usepackage{kantlipsum}
\allowdisplaybreaks
\usepackage{bbm}
\usepackage{tikz}

\newtheorem{theorem}{Theorem}[section]
\newtheorem{lemma}[theorem]{Lemma}
\newtheorem{proposition}[theorem]{Proposition}

\numberwithin{equation}{section}

\theoremstyle{remark}
\newtheorem{remark}{Remark}

\newcommand{\erfc}{\operatorname{erfc}}

\newcommand{\re}{\operatorname{Re}}

\newcommand{\im}{\operatorname{Im}}

\newcommand{\essinf}{\operatorname*{ess\,inf}}

\def\C{\mathbb{C}}
\def\D{\mathbb{D}}
\def\E{\mathbb{E}}
\def\HH{\mathbb{H}}
\def\P{\mathbb{P}}
\def\R{\mathbb{R}}

\def\rmc{\mathrm{c}}
\def\rmr{\mathrm{r}}

\def\tS{\mathbb{H}}

\begin{document}

\title[Upper Tail Large Deviations of the elliptic Ginibre ensembles]{Upper Tail Large Deviations for Extremal Eigenvalues of \\ the Real, Complex and Symplectic Elliptic Ginibre Matrices}

\author{Sung-Soo Byun}
 \address{Department of Mathematical Sciences and Research Institute of Mathematics, Seoul National University, Seoul 151-747, Republic of Korea}
 \email{sungsoobyun@snu.ac.kr}

\author{Yong-Woo Lee} 
\address{Department of Mathematical Sciences, Seoul National University, Seoul 151-747, Republic of Korea}
\email{hellowoo@snu.ac.kr} 

\author{Seungjoon Oh}
 \address{Department of Mathematical Sciences, Seoul National University, Seoul 151-747, Republic of Korea}
 \email{seungjoonoh@snu.ac.kr}

\begin{abstract}
We consider the elliptic Ginibre ensembles in the real, complex and symplectic symmetry classes. As the matrix size tends to infinity, we derive the asymptotic behaviour of the upper tail large deviation probabilities for both the spectral radius and the rightmost eigenvalue. More generally, we obtain asymptotic formulas for the probability that an eigenvalue is found in a prescribed region outside the support of the elliptic law, thereby providing a unified framework in which the results for the spectral radius and the rightmost eigenvalue appear as special cases. The key ingredient of our analysis is the precise asymptotic behaviour of the associated one-point functions, which is of independent interest. 
\end{abstract} 

\maketitle

\section{Introduction}

Exact analytic results have played a central role in the development of random matrix theory. For the classical ensembles, particularly those with Gaussian entries, the presence of rich algebraic and integrable structures permits explicit finite-dimensional formulas for a broad class of observables, including correlation functions and various deviation probabilities; see \cite{Fo10} for an account of this aspect of the literature up until around 2010. Over the past decades, these structures have given rise to striking exact identities, from determinantal and Pfaffian formulas to characterisations in terms of Painlevé equations. Remarkably, new exact formulas and structural features continue to appear, revealing additional layers of integrability even in models long regarded as classical; see e.g. \cite{Fo25,Fo25b} and references therein. 

A notable example in which such exact identities play a decisive role is the study of extremal eigenvalue statistics. 
In the classical Gaussian invariant ensembles, namely the Gaussian orthogonal, unitary, and symplectic ensembles (GOE, GUE, and GSE) \cite[Chapter 1]{Fo10}, the  joint probability density function of the eigenvalues $\{x_j\}_{j=1}^N$ on the real line is proportional to 
\begin{equation} \label{def of jpdf of GbetaE}
\prod_{1 \le j < k \le N} |x_j-x_k|^\beta \prod_{j=1}^N e^{ -\frac{\beta N}{4} x_j^2 } \,dx_j,
\end{equation}
where the Dyson index takes the values $\beta=1,2,4$ corresponding to GOE, GUE, and GSE, respectively. It is by now classical that the empirical spectral measure associated with \eqref{def of jpdf of GbetaE} converges to Wigner's semicircle law 
\begin{equation} \label{def of semicircle}
\frac{\sqrt{4-x^2}}{2\pi} \mathbbm{1}_{[-2,2]}(x)\,dx, 
\end{equation} 
as $N \to \infty.$ At the macroscopic scale, it is well known that the largest and smallest eigenvalues converge almost surely to the right and left endpoints of the semicircle law, respectively. 
At the level of fluctuations, the typical edge behaviour is governed by the Tracy-Widom distributions, which have become a cornerstone of modern probability theory and of universality phenomena in random matrix theory. 

A natural next step is to investigate atypical events, the large deviation behaviour of the extremal eigenvalues. 
In this context, a fundamental question concerns the large-$N$ asymptotics of the probability 
\begin{equation} \label{def of extremal prob GbetaE}
\P\Big[ \max_{j=1, \ldots, N} x_j \geq s \Big] \qquad \textup{for } s > 2, 
\end{equation}
where the restriction $s>2$ reflects the fact that $2$ is the right edge of the semicircle law~\eqref{def of semicircle}. 
As expected for the upper tail event, this probability decays exponentially fast as $N \to \infty$. 
It was shown in~\cite{MV09} (see also \cite[Theorem 6.2]{BDG01} for the case $\beta=1$) that for $s>2$,
\begin{equation} \label{result in MV09}
\lim_{ N \to \infty } \frac{1}{\beta N} \log \P\Big[ \max_{j=1, \ldots, N} x_j \geq s \Big]= -\Phi_1(s), 
\end{equation}
where the rate function is given by\footnote{We remark that in the original paper \cite[Eq.~(13)]{MV09}, the function $\Phi_1(s)$ is expressed as 
$$
\Phi_1(s)= \frac{s^2-2}{4} - \log s + \frac{1}{2s^2} {}_3F_2\Big( \genfrac{}{}{0pt}{}{1,1,3/2}{2,3} \Big| \frac{4}{s^2} \Big).
$$
This representation can be simplified to the form given in \eqref{def of rate Phi_1}. 
} 
\begin{equation} \label{def of rate Phi_1}
\Phi_1(s) := \frac{s\sqrt{s^2 - 4}}{4} - \log \bigg( \frac{s+ \sqrt{s^2 -4}}{2} \bigg);
\end{equation}
see also \eqref{int rep of Phi 1} below for an integral representation of $\Phi_1$.
Here, the subscript $1$ is introduced for later comparison. We remark that in \eqref{result in MV09}, the left-hand side is scaled by $\beta N$, indicating that the probability of finding an eigenvalue far from its typical location decreases as $\beta$ increases. This is consistent with the interpretation of the Dyson index $\beta$ as an inverse temperature: as $\beta$ increases, the eigenvalues exhibit stronger rigidity, tending to remain closer to their expected positions \cite{BEY14}.  

Large deviation probabilities of this type have been extensively studied in the literature.
A list of works on classical ensembles includes \cite{DM06,DM08,BDG01,AGKWW14,KC10,PS16,VMB07,WG13,WG14,WKG15,MV09,Ku19,Fo12,FW12,DR16,MS14,CFLV18,BEMN11}.
We also refer to the more recent developments in 
\cite{BCMS25,BSY25,CG21,DS17}, where precise large-$N$ asymptotic 
expansions of large deviation probabilities have been established. 
See also \cite{BFMS26,BF25a} and  references therein for a general large deviation framework formulated from the perspective of equilibrium statistics. 
We emphasise that such probabilities play an important role 
not only within random matrix theory, but also in integrable probability models, owing to several exact correspondences between these frameworks; see e.g. \cite{Joh00,BCMS25,BR01,FW04} and references therein.

\medskip 
 
Beyond the Hermitian setting, the fundamental models in non-Hermitian random matrix theory are the Ginibre ensembles in the real, complex, and symplectic symmetry classes, denoted by GinOE, GinUE, and GinSE, respectively. 
These ensembles consist of random matrices with independent and identically distributed Gaussian entries, real in the GinOE case, complex in the GinUE case, and quaternionic in the GinSE case; see \cite{BF25} for a recent review.  
Unlike the Hermitian case, where the joint probability densities of the GOE, GUE, and GSE admit the unified $\beta$-ensemble form \eqref{def of jpdf of GbetaE}, no such unified structure exists for the Ginibre ensembles.
For instance, the joint probability density of the complex eigenvalues $\{z_j\}_{j=1}^N$ of the GinUE is proportional to 
\begin{equation} \label{def of JPDF GinUE}
\prod_{ 1 \le j < k \le N } |z_j-z_k|^2 \prod_{j=1}^N e^{ -N |z_j|^2 } d^2z_j,
\end{equation}
whereas its GinSE counterpart is proportional to 
\begin{equation} \label{def of JPDF GinSE}
\prod_{ 1 \le j < k \le N } |z_j-z_k|^2 |z_j-\bar{z}_k|^2 \prod_{j=1}^N  |z_j-\bar{z}_j|^2 e^{ -2N |z_j|^2 } d^2z_j. 
\end{equation}
The situation for the GinOE is more involved.
In this case, the eigenvalue distribution is no longer absolutely continuous 
with respect to the two-dimensional Lebesgue measure, due to the non-trivial probability of having purely real eigenvalues; see Figure~\ref{Fig_samplings} (A).  Nevertheless, upon conditioning on the number of real and complex eigenvalues, the corresponding joint density admits an explicit representation; see \cite[Section 7.1]{BF25}.

Despite the distinct integrable structures appearing 
in their joint eigenvalue densities, all three Ginibre ensembles share the same limiting empirical distribution, namely the circular law
\begin{equation} \label{def of circular law}
\frac{1}{\pi}\mathbbm{1}_{ \mathbb{D} }(z) \, d^2z ,\qquad \mathbb{D}=\{ z\in \C :|z| \le 1 \}.   
\end{equation}
As before, it is natural to investigate the statistics of extremal eigenvalues in this non-Hermitian setting.
Among the most fundamental observables are the \textit{spectral radius} and the \textit{rightmost eigenvalue} given by 
\begin{equation}
\max_{j=1, \ldots, N} |z_j|, \qquad \max_{j=1, \ldots, N} \re z_j,
\end{equation}
respectively. 
By definition, the spectral radius quantifies radial deviations from the circular law and determines whether eigenvalues exit the limiting support.
In contrast, the rightmost eigenvalue plays a crucial role in continuous-time dynamical systems, as stability is determined by the sign of its real part.
A paradigmatic example is May's work \cite{May72} on ecosystem stability, where large ecological networks are modelled by random interaction matrices.
In this setting, stability of the equilibrium requires that all eigenvalues lie in the left half-plane.
Therefore, estimating the probability of atypical fluctuations that push eigenvalues beyond this stability boundary is of fundamental importance, both in random matrix theory and in applications to complex systems; see also \cite{BFK21,FK16}.

For the Ginibre ensembles, as a non-Hermitian counterpart of 
\eqref{def of extremal prob GbetaE}, one asks for the asymptotic behaviour of the probabilities
\begin{equation} \label{def of extremal prob Ginibre}
 \P\Big[ \max_{j=1, \ldots, N} |z_j| \geq s \Big], \qquad  \P\Big[ \max_{j=1, \ldots, N} \re z_j \geq s \Big] \qquad \textup{for } s > 1, 
\end{equation} 
where the restriction $s>1$ now reflects the support of the circular law 
\eqref{def of circular law}. 
For the GinUE, the asymptotic behaviour of the spectral radius was obtained in \cite{CMV16,LGMS18}, showing that
\begin{equation} \label{result in CMV16 and LGMS18}
\lim_{N \to \infty} \frac{1}{\beta N}\log \P\Big[ \max_{j=1, \ldots, N} |z_j| \geq s  \Big] =  - \Phi_0(s) ,
\end{equation}
where $\beta=2$ and 
\begin{equation}  \label{def of rate Phi_0}
\Phi_0(s):= \frac{1}{2} (s^2 - 2 \log s - 1). 
\end{equation}
As before, we introduce the subscript $0$ for later convenience. 
A key advantage of considering the GinUE and the spectral radius is that the
problem preserves radial symmetry. As a consequence, the determinantal
structure allows one to express this probability in terms of a single
summation involving the incomplete Gamma function, for which a detailed
asymptotic analysis can be carried out; see
\cite{Fo92,BP26,Ch22,Ch23,Ch23a,AZ15,AFLS25,APS09,AS13,ACM24,ACCL24} and references therein for a line of research in this direction. 

In contrast, for the GinOE and GinSE, as well as for the rightmost eigenvalue, the radial symmetry is broken so that one needs an alternative approach. 
For the GinOE and GinUE, it was shown in the recent work of Xu and Zeng 
\cite{XZ24} that
\begin{align} \label{result in XZ24}
\lim_{N \to \infty} \frac{1}{\beta N}\log \P\Big[ \max_{j=1, \ldots, N} |z_j| \geq s  \Big] = \lim_{N \to \infty} \frac{1}{\beta N}\log \P\Big[ \max_{j=1, \ldots, N} \re z_j \geq s  \Big] = - \Phi_0(s),
\end{align}
where $\beta=1$ and $\beta=2$ correspond to the GinOE and GinUE, respectively.
We refer to \cite{CESX22,CESX23,Be10,ACC25,CP14,CFLV17,BS23,Seo20,BL24a} and references therein for further related results, including fluctuations described by the Gumbel distribution and lower tail large deviation probabilities for the spectral radius.

In contrast to the extensive body of work on large deviation probabilities 
for Hermitian random matrix ensembles discussed above, 
the corresponding theory in the non-Hermitian setting remains comparatively underdeveloped, despite the growing interest in non-Hermitian random matrix theory in recent years. 
In particular, the study of upper tail large deviation probabilities for general non-Hermitian ensembles extending the Ginibre models still remains largely open. 
Indeed, even for the Ginibre ensembles themselves, the corresponding result is still unavailable in the symplectic case. 

In this work, we contribute to this direction by employing a unified framework of exact finite-$N$ analysis to derive upper tail large deviation probabilities for the elliptic Ginibre ensembles in all three  symmetry classes.
Before presenting our results in detail in the next section, we summarise our main findings as follows.

\begin{itemize}
\item For the elliptic Ginibre matrices in all three symmetry classes, we derive in Theorem~\ref{Thm. LDP maxEV} the upper tail large deviation probabilities for both the spectral radius and the rightmost eigenvalue. 
Our results interpolate between the Hermitian and the non-Hermitian results, \eqref{result in MV09} and \eqref{result in XZ24}.
\smallskip
\item In Theorem~\ref{Thm. LDP maxEV_general domain}, which extends Theorem~\ref{Thm. LDP maxEV}, we establish the large deviation probabilities for the event that an eigenvalue is located in a prescribed domain outside the typical limiting spectrum. 
\smallskip
\item As an intermediate step towards the proof of Theorem~\ref{Thm. LDP maxEV_general domain}, in Propositions~\ref{Prop_eGinUE asymp}, 
\ref{Prop_eGinOE asymp}, and \ref{Prop_eGinSE asymp}, we establish the precise asymptotic behaviour of the one-point functions of the elliptic Ginibre ensembles outside the limiting spectrum, which are of independent interest.
\end{itemize}

\subsection*{Acknowledgements} This work was supported by the National Research Foundation of Korea grants (RS-2023-00301976, RS-2025-00516909). This work was motivated by \cite{XZ24} and we would like to thank the authors, Yuanyuan Xu and Qiang Zeng, for stimulating discussions. We also thank Peter J. Forrester, Satya N. Majumdar, and Gregory Schehr for their interest and for helpful comments on an earlier version of the manuscript.

\section{Main results}

We begin by introducing our model, the elliptic Ginibre ensembles. Let $\mathcal{G}$ denote the Ginibre ensemble, the $N \times N$ random matrix with independent centred Gaussian entries of variance $1/N$. We consider the real, complex, and quaternionic cases, corresponding to the GinOE, GinUE, and GinSE, respectively. In the quaternionic case, we adopt the standard $2 \times 2$ complex matrix representation; see \cite{BF25} for further details. 

By construction, the elliptic Ginibre ensembles interpolate between the 
Ginibre ensembles and their Hermitian counterparts. 
They are defined as a convex linear combination of the Hermitian and 
anti-Hermitian parts of a Ginibre matrix: for a given non-Hermiticity parameter $\tau \in [0,1)$,
\begin{equation}
    \mathcal{X}_\tau :=  \frac{ \sqrt{1+\tau} }{2} (\mathcal{G}+\mathcal{G}^\dagger) + \frac{ \sqrt{1-\tau} }{2} (\mathcal{G}-\mathcal{G}^\dagger) .
\end{equation}
Depending on the underlying symmetry class of the Ginibre matrix, 
we denote the corresponding ensembles by eGinOE, eGinUE, and eGinSE. 
We refer to \cite[Sections 2.3, 7.9 and 10.5]{BF25} for comprehensive reviews on these ensembles. Note that, by definition, the extremal cases $\tau = 0$ and $\tau \to 1$ recover the Ginibre ensembles and the corresponding Gaussian invariant ensembles, respectively. 

The joint probability density functions of the elliptic Ginibre ensembles take forms analogous to that of their Ginibre counterparts. For instance, in the cases of the eGinUE and eGinSE, the joint densities are given by \eqref{def of JPDF GinUE} and \eqref{def of JPDF GinSE}, respectively, with the Gaussian term $|z|^2$ replaced by the corresponding external potential: 
\begin{equation} \label{def of V eGinibre}
 V(z) := \frac{1}{1-\tau^2}\Big( |z|^2-\tau \re z^2\Big).
\end{equation}  
Note that when $\tau=0$, the potential $V$ reduces to the quadratic potential $|z|^2$ corresponding to the Ginibre ensembles. On the other hand, as $\tau \to 1$, it formally recovers the Gaussian potential in \eqref{def of jpdf of GbetaE}, since
\begin{equation}
\lim_{ \tau \to 1 } V(x+iy) = \begin{cases}
x^2 &\textup{if } y =0,
\smallskip 
\\ 
+\infty & \textup{otherwise}.
\end{cases}
\end{equation}
Here, the value $+\infty$ reflects the fact that, in this limit, the probability of observing eigenvalues off the real axis vanishes, which is consistent with the matrix model becoming Hermitian. 

Turning to the averaged macroscopic behaviour of the eigenvalues $\{z_j\}_{j=1}^N$, as $N \to \infty$, the empirical spectral distribution converges weakly to
\begin{equation} \label{def of elliptic law}
\frac{1}{\pi(1-\tau^2)} \, \mathbbm{1}_{ E }(z) \, d^2z ,\qquad E:=  \Big\{ (x,y) \in \R^2: \Big( \frac{x}{1+\tau}\Big)^2 + \Big( \frac{y}{1-\tau} \Big)^2 \le 1 \Big\},
\end{equation}
which is known as the elliptic law; see Figure~\ref{Fig_samplings}.

\begin{figure}[t]
\begin{subfigure}{0.32\textwidth}
        \begin{center}
            \includegraphics[width=\linewidth]{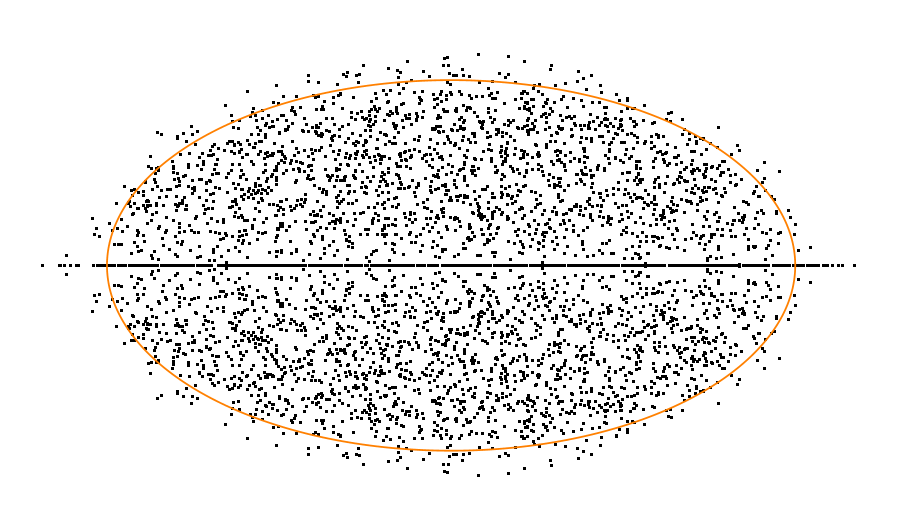}
        \end{center}
        \subcaption{ The eGinOE }
    \end{subfigure}
    \begin{subfigure}{0.32\textwidth}
        \begin{center}
            \includegraphics[width=\linewidth]{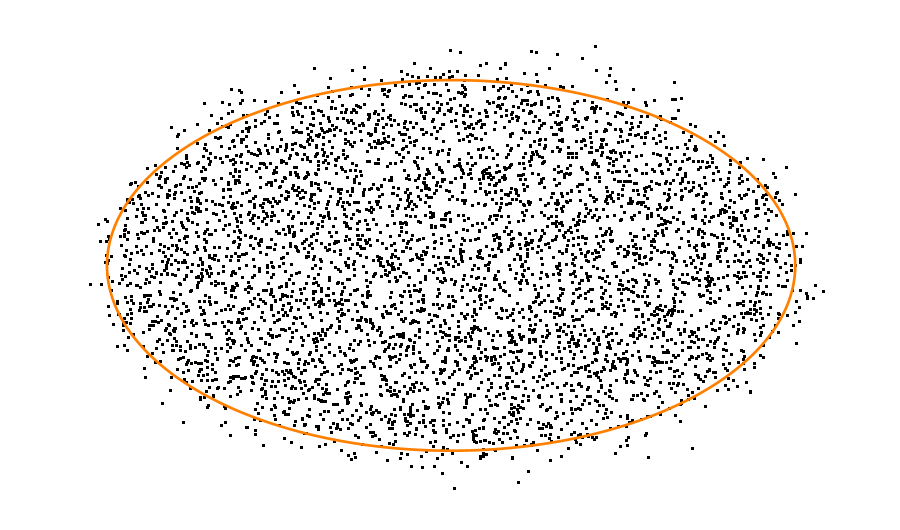}
        \end{center}
        \subcaption{ The eGinUE  }
    \end{subfigure}
    \begin{subfigure}{0.32\textwidth}
        \begin{center}
            \includegraphics[width=\linewidth]{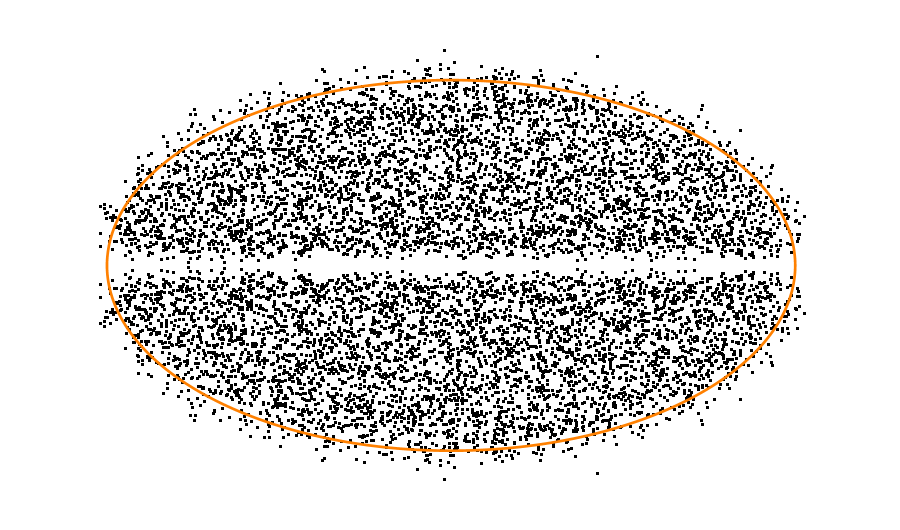}
        \end{center}
        \subcaption{ The eGinSE  }
    \end{subfigure}
    \caption{ Spectra of 50 samples from the elliptic Ginibre ensemble with $N=100$ and  $\tau = 0.3$. The solid orange lines indicate the boundary of the ellipse  \eqref{def of elliptic law}. } 
    \label{Fig_samplings}
\end{figure}

As mentioned earlier, the eigenvalue statistics of the eGinOE exhibit a 
distinctive feature compared to the other symmetry classes: with 
non-trivial probability, the matrix possesses purely real eigenvalues; cf. Figure~\ref{Fig_samplings} (A).  
The study of real eigenvalues in this setting has become an active area 
of research, motivated by various applications; 
see e.g. \cite{BKLL23,BFK21,Fo24,BMS25,Kiv25,CFW25,BJLS25} for recent developments. We also refer to \cite{BN25,FIK20,FK18,FS23,Fo25a,AB23,LMS22} for related works on variants of these models. 
In fact, in the eGinOE case, the number of real eigenvalues is typically 
of order $O(\sqrt{N})$ when $\tau$ is fixed. Consequently, in the 
large-$N$ limit, the empirical spectral measure is predominantly 
determined by the complex eigenvalues and therefore converges to the 
elliptic law \eqref{def of elliptic law}.

\subsection{Spectral radius and rightmost eigenvalue}

As in the Ginibre ensemble, we investigate the asymptotic behaviour of 
probabilities of the form \eqref{def of extremal prob Ginibre}. 
In the present setting, however, the elliptic law 
\eqref{def of elliptic law} implies that the relevant regime 
corresponds to $s > 1+\tau$. More precisely, we consider 
\begin{equation} \label{def of extremal prob eGinibre}
 \P\Big[ \max_{j=1, \ldots, N} |z_j| \geq s \Big], \qquad  \P\Big[ \max_{j=1, \ldots, N} \re z_j \geq s \Big] \qquad \textup{for } s > 1+\tau. 
\end{equation} 
See Figure~\ref{Fig_LDP illustration} for an illustration of the 
situation. Our main result below establishes the asymptotic 
behaviour of these probabilities. 
For convenience, we adopt the notation
\begin{equation} \label{def of beta for eGinibres}
\beta = \begin{cases}
1 &\textup{for the eGinOE},
\smallskip 
\\
2 &\textup{for the eGinUE},
\smallskip 
\\
4 &\textup{for the eGinSE}.
\end{cases}
\end{equation}
Then we have the following.

\begin{figure}[t]
\begin{subfigure}{0.4\textwidth}
        \begin{center}
            \includegraphics[width=\linewidth]{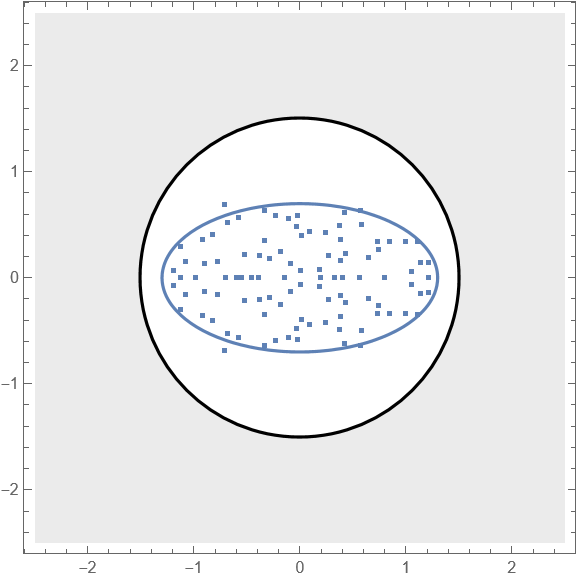}
        \end{center}
        \subcaption{ $\P[ \max |z_j| \geq s ]$ }
    \end{subfigure}
    \qquad
    \begin{subfigure}{0.4\textwidth}
        \begin{center}
            \includegraphics[width=\linewidth]{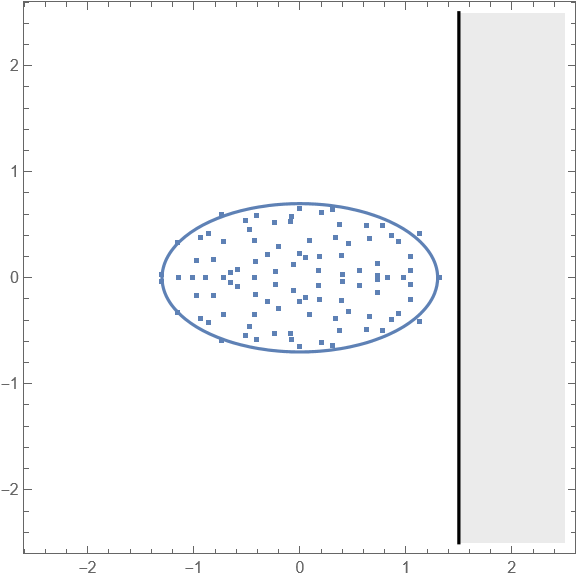}
        \end{center}
        \subcaption{ $\P[ \max \re z_j \geq s ]$  }
    \end{subfigure}
    \caption{ The plot displays the eigenvalues of the eGinOE for $N = 100$ and $\tau = 0.3$. The probability defined in \eqref{def of extremal prob eGinibre} corresponds to the event that at least one eigenvalue lies in the shaded region, where $s = 1.5$. By \eqref{def of rate Phi_tau}, for these parameter values we have $\Phi_\tau(s) \approx 0.038$. Hence, by Theorem~\ref{Thm. LDP maxEV}, the probability of this event is exponentially small, behaving as $e^{-N \Phi_\tau(s)} \approx 0.022$. }
    \label{Fig_LDP illustration}
\end{figure}

\begin{theorem}[\textbf{Upper tail large deviation probabilities for spectral radius and rightmost eigenvalue}] \label{Thm. LDP maxEV}
Let $\tau \in [0,1)$ be fixed, and let $N \in \mathbb{N}$. In the eGinOE case, we further assume that $N$ is even. Suppose that $s>1+\tau$. Then we have 
    \begin{equation}
    \lim_{N \to \infty} \frac{1}{\beta N}\log \P\Big[ \max_{j=1, \ldots, N} |z_j| \geq s  \Big] = \lim_{N \to \infty} \frac{1}{\beta N}\log \P\Big[ \max_{j=1, \ldots, N} \re z_j \geq s  \Big]=  -  \Phi_\tau(s),
    \end{equation}
    where
     \begin{equation} \label{def of rate Phi_tau}
        \Phi_\tau(s) :=  \frac12 \bigg( \frac{1}{1+\tau}-\frac{1}{2\tau}\bigg) s^2+  \frac{  s \sqrt{s^2-4\tau} }{4\tau }   -  \log\bigg( \frac{s + \sqrt{s^2 -4\tau}}{2}\bigg) . 
    \end{equation}
    Here, $\beta$ is given by \eqref{def of beta for eGinibres}. 
\end{theorem}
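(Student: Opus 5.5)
Both probabilities are of the form $\P[\exists j:\ z_j\in\Omega_s]$, with $\Omega_s=\{|z|\ge s\}$ or $\Omega_s=\{\re z\ge s\}$, and $\Omega_s$ lies strictly outside the ellipse $E$ when $s>1+\tau$. I would therefore deduce Theorem~\ref{Thm. LDP maxEV} from the general result, Theorem~\ref{Thm. LDP maxEV_general domain}, applied to these two regions. The argument is a sandwich between the expected number of eigenvalues in $\Omega_s$ and the one-point function near the point of $\Omega_s$ closest to the ellipse in the appropriate sense.

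\emph{Upper bound.} By the union bound, $\P[\exists j:\ z_j\in\Omega_s]\le \E\,\#\{j: z_j\in\Omega_s\}=\int_{\Omega_s}R_1(z)\,dA(z)$. Here $R_1$ is the one-point function; in the eGinOE case it is the sum of the real part (integrated against $dx$ on $\Omega_s\cap\R$) and the complex part. The precise asymptotics of Propositions~\ref{Prop_eGinUE asymp}, \ref{Prop_eGinOE asymp} and \ref{Prop_eGinSE asymp} should give, uniformly on compacts outside $E$, $R_1(z)=\exp(-\beta N\,\mathcal{U}(z)+O(\log N))$. The rate $\mathcal{U}(z)$ is $\tfrac12\big(V(z)-2\log|z+\sqrt{z^2-4\tau}|+\dots\big)$ minus its value on $\partial E$, which is the standard "effective potential" $Q(z)-2U^\mu(z)$ normalised to vanish on the droplet. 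In addition we need a crude Gaussian tail bound $R_1(z)\le e^{-cN|z|^2}$ for large $|z|$, which controls the noncompact part of $\Omega_s$. A Laplace-type argument then gives $\frac1{\beta N}\log\int_{\Omega_s}R_1\to-\inf_{\Omega_s}\mathcal{U}$.

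\emph{Lower bound.} I would take a small disc $B$ around the minimiser $z_*\in\Omega_s$ and use $\P[\exists j:z_j\in B]\ge \E N_B-\tfrac12\E N_B(N_B-1)$. Negative association of determinantal processes, and the Pfaffian analogue, should make the second factorial moment at most $(\E N_B)^2$, which is exponentially smaller than $\E N_B$. More robustly, one can write $\P\ge (\E N_B)^2/\E N_B^2$ and bound $\E N_B^2\le \E N_B+\int_{B^2}R_1R_1$. For the eGinOE the relevant minimiser is real, so the real-eigenvalue density contributes. Its exponential rate must coincide with that of the complex density on the real axis, with $\beta=1$.

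\emph{Identifying the minimum.} It remains to check that $\inf_{\Omega_s}\mathcal{U}=\Phi_\tau(s)$ for both regions, attained at $z_*=s$. For the half-plane, $\mathcal{U}(x+iy)$ should be increasing in $|y|$ for fixed $x$; the $-\tau\re z^2$ term gives a weaker confinement, but the log term decreases less quickly. For the disc region one checks monotonicity in the angle on the circle $|z|=s$. Substituting $z=s$ and evaluating $\log\frac{s+\sqrt{s^2-4\tau}}2$ and $V(s)=s^2/(1+\tau)$, one should recover \eqref{def of rate Phi_tau} after simplifying with the Joukowski parametrisation $s=w+\tau/w$. The main obstacle is this monotonicity/minimisation step, together with uniformity of the one-point asymptotics up to the boundary of $\Omega_s$ and at infinity; the rest is bookkeeping. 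In the eGinOE case, $N$ even is needed so that the Pfaffian formulas apply.
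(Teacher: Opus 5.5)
Your overall plan matches the paper's: the theorem is read off from the general-domain statement, the upper bound is the union bound $\P[\exists j:\ z_j\in U]\le \E N_U=\int_U R_N$ (with $U$ your $\Omega_s$), evaluated by Laplace's method using Propositions~\ref{Prop_eGinUE asymp}, \ref{Prop_eGinOE asymp} and \ref{Prop_eGinSE asymp}, and one finishes with $\inf_U\Omega=\Omega(s)$ (Lemma~\ref{Lem_omega for special U}) and $\tfrac12\Omega(s)=\Phi_\tau(s)$.

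\textbf{The gap is in your lower bound.} Both versions you propose rest on the pointwise inequality $\rho_2(z,w)\le R_1(z)R_1(w)$ for the two-point function. The Bonferroni route needs it to get $\E N_B(N_B-1)\le(\E N_B)^2$. Your ``more robust'' Paley--Zygmund route needs exactly the same inequality to get $\E N_B^2\le\E N_B+\int_{B^2}R_1R_1$, so it is no more robust. The inequality holds for the eGinUE, whose determinantal kernel is Hermitian. There is, however, no general Pfaffian analogue: for a Pfaffian process $\rho_2-R_1\otimes R_1$ is not sign-definite. For instance, the bulk GSE pair correlation exceeds the product of the densities at a distance of about one mean spacing. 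For the eGinSE and eGinOE you would therefore need asymptotics of the off-diagonal Pfaffian kernel outside $E$, which neither the paper nor your proposal supplies.

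\textbf{The fix.} No correlation information is needed at the exponential scale. Since $N_U\le N\,\mathbbm{1}_{\{N_U\ge 1\}}$,
\[
\P\big[\exists j:\ z_j\in U\big]\ \ge\ \frac1N\,\E N_U=\frac1N\int_U R_N .
\]
This is the left inequality in \eqref{eq. ingredient 1 Thm LDP general domain}; equivalently, use $\E N_B^2\le N\,\E N_B$ in your second-moment bound. The factor $1/N$ disappears after taking $\frac{1}{\beta N}\log$, so both bounds are governed by the same integral.

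\textbf{Two smaller corrections.}
First, for the eGinOE the complex density decays like $e^{-N\Omega(z)}$, twice the rate of the real density $e^{-N\Omega(x)/2}$ (Proposition~\ref{Prop_eGinOE asymp}). So your uniform formula $R_1=\exp(-\beta N\mathcal U+O(\log N))$ is wrong for the complex part, although this is harmless for the upper bound. The two rates do not coincide; the $\beta=1$ rate comes from the real eigenvalues alone.

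Second, the minimisation you flag as the main obstacle is settled by closed forms. With $u=\psi(z)$, which satisfies $|u|>1$ off $E$,
\[
\partial_x\Omega=\frac{2x}{1+\tau}\,\frac{|u|^2-1}{|u|^2+\tau},\qquad
\partial_y\Omega=\frac{2y}{1-\tau}\,\frac{|u|^2-1}{|u|^2-\tau},\qquad
\partial_\theta\Omega=\frac{2\tau r^2\sin2\theta}{1-\tau^2}\,\frac{|u|^4-1}{|u|^4-\tau^2}.
\]
These give monotonicity in $x$, $|y|$, $r$ and $\theta$, exactly as needed.
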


One can observe that the rate function \eqref{def of rate Phi_tau}  reduces to \eqref{def of rate Phi_1} and \eqref{def of rate Phi_0}  when $\tau = 0$ and $\tau = 1$, respectively. 
Consequently, for $\beta = 1,2$, Theorem~\ref{Thm. LDP maxEV} interpolates between the results of \cite{MV09} ($\tau = 1$) and \cite{XZ24} ($\tau = 0$). 
On the other hand, we note that for $\beta = 4$, our result is new even in the GinSE case ($\tau = 0$). 

It is straightforward to verify that $\Phi_\tau(s) \ge 0$ for $s \ge 1+\tau$, with equality at $s = 1+\tau$. 
Moreover, the function $s \mapsto \Phi_\tau(s)$ is increasing on this domain. See Figure~\ref{Fig_Phi tau} for the graphs of $s \mapsto \Phi_\tau(s)$. 

In addition, note that for $\tau \in [0,1)$,
\begin{equation}
\Phi_\tau(s) = \frac{ (s-1-\tau)^2 }{1-\tau^2} +O\big( (s-1-\tau)^3 \big), \qquad \textup{as } s \to 1+\tau.    
\end{equation}
This behaviour is in contrast with the case $\tau=1$, where
\begin{equation}
\Phi_1(s) = \frac{2}{3} (s-2)^{ \frac32 } +O\big( (s-2)^{ \frac53 } \big), \qquad \textup{as } s \to 2.    
\end{equation}
The difference originates from the analytic structure of $\Phi_\tau$. 
For $\tau<1$, the point $s=1+\tau$ lies away from the branch point 
$s=2\sqrt{\tau}$ of the square-root term in \eqref{def of rate Phi_tau}, and hence $\Phi_\tau$ is analytic in a neighbourhood of $1+\tau$, leading to the 
quadratic expansion above. In contrast, when $\tau=1$ the branch point 
coincides with $s=2$, and the analyticity breaks down, resulting in the 
non-analytic $(s-2)^{3/2}$ behaviour. Such different local behaviours 
are natural and reflect the distinct tail asymptotics of the Gumbel and 
Tracy--Widom distributions, respectively. 

\begin{figure}[t]
\begin{subfigure}{0.3\textwidth}
        \begin{center}
            \includegraphics[width=\linewidth]{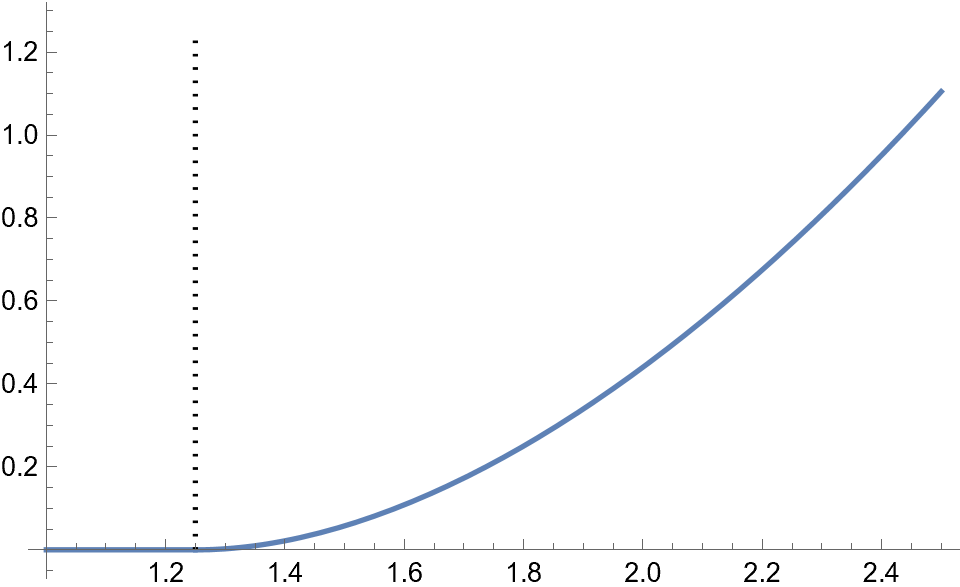}
        \end{center}
        \subcaption{ $\tau=1/4$ }
    \end{subfigure}
    \qquad
    \begin{subfigure}{0.3\textwidth}
        \begin{center}
            \includegraphics[width=\linewidth]{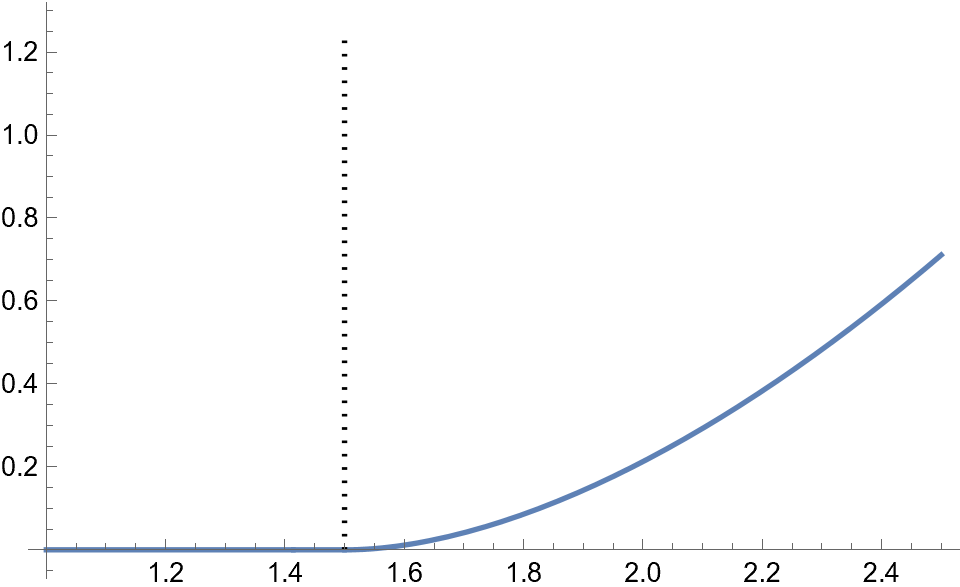}
        \end{center}
        \subcaption{  $\tau=1/2$  }
    \end{subfigure}
     \qquad
    \begin{subfigure}{0.3\textwidth}
        \begin{center}
            \includegraphics[width=\linewidth]{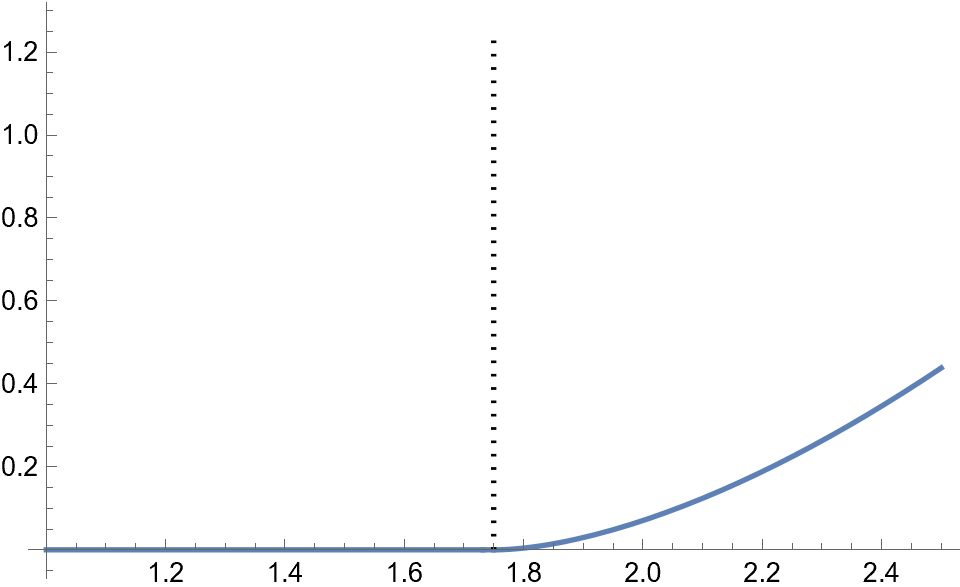}
        \end{center}
        \subcaption{ $ \tau=3/4 $  }
    \end{subfigure}
    \caption{ The plots show the graphs of $s \mapsto \Phi_\tau(s)$ for $s > 1+\tau$. The black dotted lines indicate the point $s = 1+\tau$. }
    \label{Fig_Phi tau}
\end{figure}

\begin{remark}[Integral representation] Due to a classical integral evaluation associated with the semicircle law, the rate function \eqref{def of rate Phi_1} admits the representation 
\begin{align} \label{int rep of Phi 1}
\Phi_1(s) = \frac{s^2}{4} -\frac12 + \int_{ -2 }^2 \log \frac{1}{|s-x|} \frac{ \sqrt{4-x^2} }{2\pi} \,dx . 
\end{align}
More generally, for the elliptic Ginibre ensembles, the rate function \eqref{def of rate Phi_tau} admits an analogous two-dimensional integral representation
\begin{equation} \label{int rep of Phi tau}
\Phi_\tau(s) = \frac{s^2}{2(1+\tau)} -\frac12 +  \int_E\log \frac{1}{|s-w|}\,\frac{d^2w}{ \pi(1-\tau^2) }, 
\end{equation}
where the semicircle law is replaced by the elliptic law \eqref{def of elliptic law}.
This evaluation follows from \eqref{evluation of the log energy elliptic law} below.
In \eqref{int rep of Phi tau}, the first term on the right-hand side can be interpreted as $V(s)/2$, where $V$ is the external potential \eqref{def of V eGinibre}. We refer to Remark~\ref{Rem_obstacle} for further discussion of the potential-theoretic origin of such integral representations. 
\end{remark}

\begin{remark}[Finite-$N$ corrections] When describing the asymptotic behaviour of the spectral radius \eqref{result in CMV16 and LGMS18}, the first correction term was also derived in \cite[Eq.~(14)]{LGMS18}. This correction arises from the subexponential prefactor in Laplace's method. A similar refinement could also be obtained in our setting, since Propositions~\ref{Prop_eGinUE asymp}, \ref{Prop_eGinOE asymp} and
\ref{Prop_eGinSE asymp} provide precise asymptotic expansions of the
one-point functions that serve as the integrands in Laplace's method.  
\end{remark}

\subsection{Generalisation to arbitrary domains}

Our results in Theorem~\ref{Thm. LDP maxEV} can in fact be formulated in a more general setting. For this purpose, let $U \subset \mathbb{C}$ be a domain in the complex plane such that $U \cap E = \emptyset$, where $E$ denotes the support of the elliptic law \eqref{def of elliptic law}.
The probabilities in \eqref{def of extremal prob eGinibre} then arise as 
special cases of the more general probability that at least one eigenvalue lies in the domain $U$. 
Namely, in the spectral radius case one takes 
$U = \{ z \in \mathbb{C} : |z| \ge s \}$, whereas in the rightmost eigenvalue case one considers $U = \{ z \in \mathbb{C} : \re z \ge s \}$. 

The proof of our main result works for such a general formulation, albeit in this case, the resulting rate function becomes slightly more involved compared to that in Theorem~\ref{Thm. LDP maxEV}. To describe it, we first introduce 
\begin{equation} \label{def of Omega explicit}
\Omega(z) := \frac{1}{1-\tau^2}\Big( |z|^2-\tau \re z^2\Big) -  \re  \bigg[ \frac{2z}{ z+\sqrt{z^2-4\tau} } \bigg]- 2 \log \bigg| \frac{z + \sqrt{z^2 - 4\tau}}{2} \bigg|, 
\end{equation}
where $z \in E^c$. Here, the square root is taken with the principal branch on $\mathbb{C} \setminus [-2\sqrt{\tau}, 2\sqrt{\tau}]$; see Figure~\ref{Fig_Omega} for the graphs of $z \mapsto \Omega(z).$ 
This function satisfies the following properties: for $z \in E^{c}$, one has $\Omega(z) \ge 0$, with equality on $\partial E$; see \cite[Lemma D.1]{LR16}. Then as an extension of Theorem~\ref{Thm. LDP maxEV}, we have the following. 

\begin{figure}[t]
\begin{subfigure}{0.55\textwidth}
        \begin{center}
            \includegraphics[width=\linewidth]{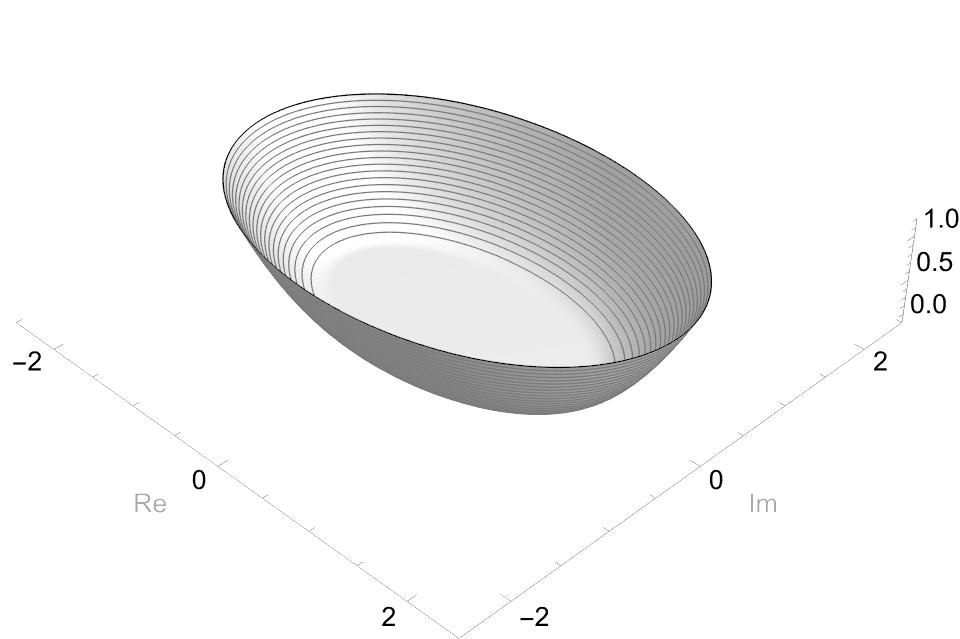}
        \end{center}
        \subcaption{ Surface plot }
    \end{subfigure}
    \qquad
    \begin{subfigure}{0.35\textwidth}
        \begin{center}
            \includegraphics[width=\linewidth]{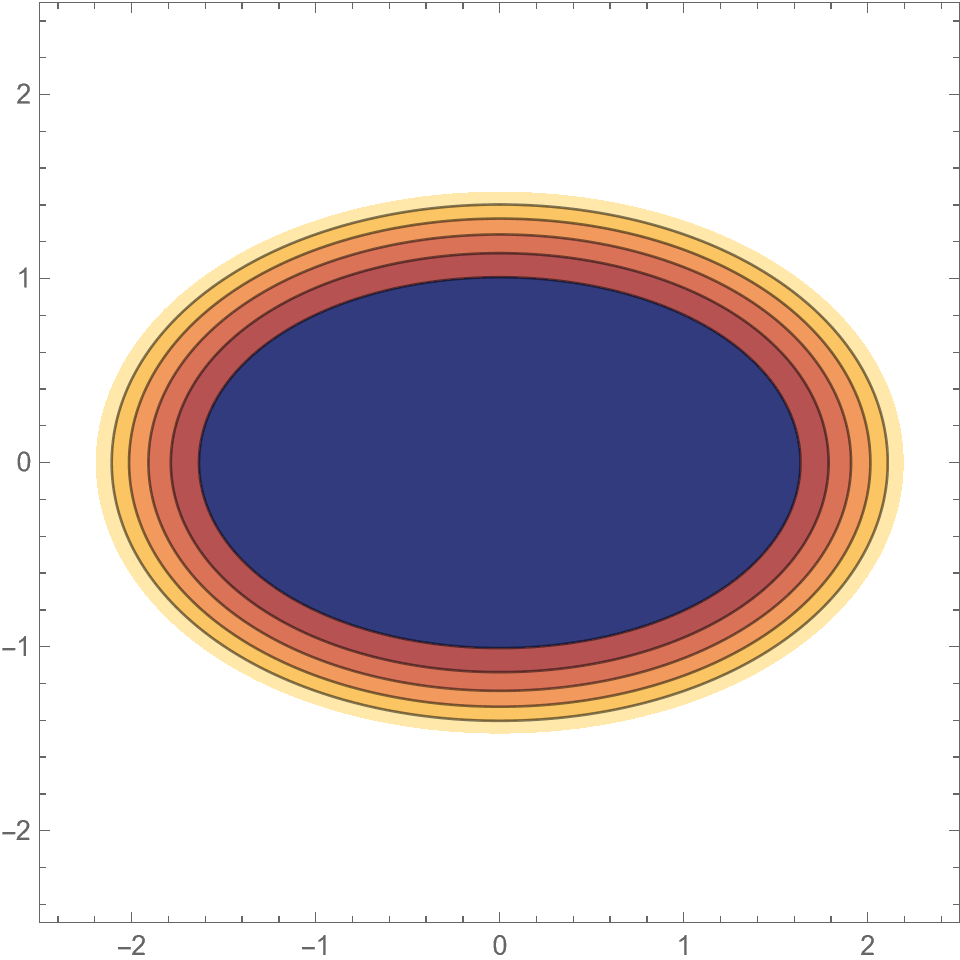}
        \end{center}
        \subcaption{ Contour plot  }
    \end{subfigure}
    \caption{ Surface and contour plot of $z \mapsto \Omega(z)$ in $E^c$, where $\tau = 0.3$.}
    \label{Fig_Omega}
\end{figure}

\begin{theorem}[\textbf{Upper tail large deviation probabilities in a general domain}] \label{Thm. LDP maxEV_general domain}
Let $\tau \in [0,1)$ be fixed, and let $N \in \mathbb{N}$. In the eGinOE case, we further assume that $N$ is even. Suppose $U \subset \C$ is measurable and bounded away from $E$, where $E$ is given by \eqref{def of elliptic law}. Then for the eGinUE and eGinSE, we have 
    \begin{equation}
    \lim_{N \to \infty} \frac{1}{\beta N}\log \P\Big[ \exists\, 1 \le j \le N \ \textup{such that } z_j \in U \Big] = - \frac12 \essinf_{ z \in U } \Omega(z), 
    \end{equation} 
where $\beta$ is given by \eqref{def of beta for eGinibres}, and for the eGinOE, we have 
\begin{equation}
    \lim_{N \to \infty} \frac{1}{N}\log \P\Big[ \exists\, 1 \le j \le N \ \textup{such that } z_j \in U \Big] = - \min \Big\{\essinf_{ z \in U \setminus \R } \Omega(z),  \frac{1}{2}\essinf_{z \in U \cap \R} \Omega(z) \Big\}.
\end{equation}
Here, $\essinf$ denotes the essential infimum with respect to the Lebesgue measure on the underlying space (either $\C$ or $\R$) with the convention $\essinf \emptyset = \infty$.
\end{theorem}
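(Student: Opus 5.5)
The plan is to sandwich the probability between the expected number of eigenvalues in $U$ and a lower bound of the same exponential order, and then to evaluate that expectation by Laplace's method using the one-point function asymptotics of Propositions~\ref{Prop_eGinUE asymp}, \ref{Prop_eGinOE asymp} and \ref{Prop_eGinSE asymp}.

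\textbf{Upper bound.} By Markov's inequality,
\begin{equation*}
\P[\exists j: z_j\in U]\le \E\#\{j: z_j\in U\}=\int_U R_N(z)\,d^2z,
\end{equation*}
where $R_N$ is the one-point function. For the eGinOE one adds the corresponding real integral $\int_{U\cap\R}R_N^{\rmr}(x)\,dx$ of the real one-point density. The propositions should give, uniformly on compact subsets of $E^c$:
\begin{itemize}
\item $R_N(z)=e^{-\frac{\beta N}{2}\Omega(z)+O(\log N)}$ for the eGinUE and eGinSE;
\item $R_N^{\rmc}(z)=e^{-N\Omega(z)+O(\log N)}$ for complex points of the eGinOE;
\item $R_N^{\rmr}(x)=e^{-\frac N2\Omega(x)+O(\log N)}$ for real points of the eGinOE.
\end{itemize}
The last two exponents reflect that a complex eigenvalue of a real matrix comes with its conjugate, so the cost doubles, while a real one pays half.

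$U$ may be unbounded, so I need a tail estimate. Since $\Omega(z)\sim |z|^2/(1+\tau)$ grows quadratically, I will use a crude global bound on $R_N$ for $|z|\ge M$. For instance, bound the kernel sum by $e^{-NV(z)}\sum_k |p_k(z)|^2/h_k$ and use the growth $|z|^{2k}$ of the Hermite polynomials. This makes the contribution of $\{|z|\ge M\}$ smaller than any prescribed exponential for $M$ large. On the bounded part, $\int_U e^{-c N\Omega}\le |U\cap B_M|\, e^{-cN\,\essinf_U\Omega}$, which yields the $\limsup$. Here essinf, rather than inf, suffices because measure-zero sets do not contribute to the integral.

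\textbf{Lower bound.} Let $\delta>0$. By definition of the essential infimum and continuity of $\Omega$ on $E^c$, the set $A=\{z\in U:\Omega(z)<\essinf_U\Omega+\delta\}$ has positive measure. For the eGinOE I take the complex or real part, whichever achieves the minimum. I would then use the second-moment (Paley--Zygmund type) inequality
\begin{equation*}
\P[X\ge1]\ge \frac{(\E X)^2}{\E X^2}, \qquad X=\#\{j:z_j\in A\},
\end{equation*}
where $\E X^2=\E X+\int_{A^2}R_N^{(2)}$. The determinantal (UE) or Pfaffian (OE/SE) structure gives $R^{(2)}(z,w)\le R(z)R(w)$ in the UE case, by negative correlation, so $\E X^2\le \E X+(\E X)^2$ and hence $\P\ge \E X/(1+\E X)\ge \tfrac12\min(\E X,1)$. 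Since $\E X$ is exponentially small, this gives $\liminf \frac{1}{\beta N}\log\P\ge -\tfrac12(\essinf\Omega+\delta)$, and letting $\delta\to0$ finishes.

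\textbf{Main obstacle.} For the Pfaffian cases the inequality $R^{(2)}\le R\otimes R$ is not automatic. There I would instead expand the $2\times2$ Pfaffian: $R^{(2)}(z,w)=R(z)R(w)-|S_N(z,w)|^2+D_N(z,w)\,I_N(z,w)$. I would then bound the cross terms using Cauchy--Schwarz-type estimates on the skew kernels, showing they are at most $e^{-\frac{\beta N}{2}(\Omega(z)+\Omega(w))+O(\log N)}$. This only needs off-diagonal kernel bounds of the same form as the diagonal asymptotics. It then gives $\E X^2\le \E X+ e^{O(\log N)}(\E X)^2$, which suffices at exponential scale. For the eGinOE, the mixed real–complex and real–real terms are handled the same way. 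Establishing these off-diagonal bounds from the integral representations of the kernels is the step I expect to be hardest. A fallback is a direct lower bound by conditioning on one eigenvalue: integrate the joint density over configurations with one point in $A$ and the rest near equilibrium, which gives the rate $\frac{\beta}{2}\Omega$ via the potential-theoretic identity behind \eqref{int rep of Phi tau}.
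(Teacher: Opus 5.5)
Your upper bound ($\P[X\ge 1]\le \E X=\int_U R_N$, followed by a Laplace-type evaluation with a tail cut-off) is the same as in the paper. Your second-moment lower bound is also fine for the eGinUE.

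\textbf{The gap: lower bound for the eGinSE and eGinOE.} As written, this step depends on off-diagonal estimates for the Pfaffian kernels, or on a joint-density fallback. You prove neither. Neither follows from Propositions~\ref{Prop_eGinUE asymp}--\ref{Prop_eGinSE asymp}, which control only one-point functions. So for $\beta=1,4$ the argument is incomplete.

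The missing observation is that no two-point information is needed at exponential scale. Since $X=\#\{j:z_j\in A\}\le N$, one has $X\le N\,\mathbbm{1}_{\{X\ge 1\}}$, hence
\[
\P[X\ge 1]\ \ge\ \frac{1}{N}\,\E X=\frac{1}{N}\int_A R_N .
\]
Equivalently, insert $\E X^2\le N\,\E X$ into your Paley--Zygmund inequality. The factor $1/N$ disappears after taking $\frac{1}{N}\log$, so this settles all three symmetry classes at once. It is exactly the sandwich $\frac1N\sum_j\P[z_j\in U]\le\P[\exists j:\,z_j\in U]\le\sum_j\P[z_j\in U]$ on which the paper's proof rests. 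Your negative-correlation argument only improves the polynomial prefactor, and the step you flag as hardest can simply be dropped.

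\textbf{A smaller point: uniformity near the real axis.} The two-sided estimate $e^{-\frac{\beta N}{2}\Omega(z)+O(\log N)}$ is not uniform on compact subsets of $E^c$ for $R_N^{\HH}$ and $R_N^{\R,\rmc}$. Their prefactors in Propositions~\ref{Prop_eGinOE asymp} and \ref{Prop_eGinSE asymp} vanish on $\R$ (for example the factor $\im z$ in $R_N^{\HH}$). In addition, turning $\omega_N^{\R}(z)e^{2N\re g(z)}$ into $e^{-N\Omega(z)}$ via the $\erfc$ asymptotics is not uniform as $\im z\to 0$. This is harmless:
\begin{itemize}
\item the upper bound only needs the prefactors to be bounded above;
\item for the lower bound you may replace $A$ by $A\cap\{|\im z|\ge \eta\}$, which still has positive measure for small $\eta>0$ because $\R$ is Lebesgue-null.
\end{itemize}

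\textbf{Your tail estimate.} Your crude bound for $|z|\ge M$ is sound: the kernel sum runs only over $k\le N-1$, so the polynomial growth is at most $e^{2N\log|z|+O(N)}$, against $e^{-N|z|^2/(1+\tau)}$. It is a legitimate, and more self-contained, alternative to the paper's route, which uses uniformity of the one-point asymptotics together with $\Omega(|z|)\le \Omega(z)$.
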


Note that Theorem~\ref{Thm. LDP maxEV_general domain} extends 
Theorem~\ref{Thm. LDP maxEV}. 
The latter is recovered by taking $U = \{ z \in \mathbb{C} : |z| \ge s \}$ or $U = \{ z \in \mathbb{C} : \re z \ge s \}$, together with the observation  
\begin{equation}  \label{relation btw Phi and Omega}
\Phi_\tau(s) = \frac12\, \Omega(s)  
\end{equation} 
and the fact that for such domains, 
\begin{equation}
\inf_{ z \in U } \Omega(z)= \Omega(s); 
\end{equation}
see Lemma~\ref{Lem_omega for special U} below.
 
\begin{remark}[Rate function as a solution to an obstacle problem] \label{Rem_obstacle}
We now discuss the potential-theoretic interpretation of the rate 
function. 
For this, we briefly recall some basic notions from logarithmic 
potential theory \cite{ST97}.
Let $Q : \mathbb{C} \to \mathbb{R}$ be a general external potential 
satisfying standard growth and regularity assumptions. 
The equilibrium measure $\sigma_Q$ is defined as the unique minimiser 
of the weighted logarithmic energy functional 
\begin{equation} \label{def of log energy}
	I_Q[\mu]:= \int_{ \mathbb{C}^2 } \log \frac{1}{ |z-w| }\, d\mu(z)\, d\mu(w) +\int_{ \mathbb{C} } Q(z) \,d\mu(z). 
\end{equation}
Under suitable regularity conditions on $Q$, the equilibrium measure 
takes the form 
\begin{equation} \label{def of eq msr general}
d \sigma_Q(z) =  \Delta Q(z)\, \mathbbm{1}_{S_Q}(z) \, \frac{d^2z}{\pi}, \qquad \Delta=\partial \bar{\partial},   
\end{equation}
where the compact set $S_Q$, known as the \emph{droplet}, is the support of 
$\sigma_Q$.
In particular, when the potential $Q$ is given by $V$ in 
\eqref{def of V eGinibre}, the associated equilibrium measure coincides 
with the elliptic law \eqref{def of elliptic law}.

The \textit{obstacle function} $\check{Q}$ associated with a potential 
$Q$ is defined as the maximal subharmonic function satisfying
\begin{equation}
\check{Q}(z) \le Q(z) \quad \text{on } \mathbb{C},
\qquad 
\check{Q}(z) \le \log |z|^2 + O(1) \quad \text{as } |z| \to \infty.
\end{equation} 
Equivalently, it admits the representation
\begin{equation} \label{def of Q check}
\check{Q}(z)=\gamma-2\,U_{Q}(z), \qquad U_{Q}(z):=\int_\C \log \frac{1}{|z-w|}\,d\sigma_{Q}(w), 
\end{equation}
where  $\gamma$ is a unique constant satisfying $2U_{Q}+Q =\gamma$ on $S_Q$ and $2U_{Q}+Q \ge \gamma$ on $\C$ quasi-everywhere. 

For the elliptic law \eqref{def of elliptic law}, by using 
\cite[Lemma 2.4]{By24a} and \cite[Lemma 3.3 with $c=0$]{BY25}, one can show that 
\begin{align} \label{evluation of the log energy elliptic law}
\int_E\log \frac{1}{|z-w|}\,\frac{d^2w}{ (1-\tau^2)\pi }   = \frac12-  \re  \bigg[ \frac{z (z-\sqrt{z^2-4\tau}) }{ 4\tau } \bigg] -   \log \bigg| \frac{z + \sqrt{z^2 - 4\tau}}{2} \bigg| .
\end{align}
Then by the representation \eqref{def of Q check}, for the potential $V$ defined in \eqref{def of V eGinibre}, we have 
\begin{equation}
\check{V}(z)=   \re  \bigg[ \frac{2z}{ z+\sqrt{z^2-4\tau} } \bigg] + 2 \log \bigg| \frac{z + \sqrt{z^2 - 4\tau}}{2} \bigg|.
\end{equation}
Combining this with \eqref{def of Omega explicit}, it follows that $\Omega$ admits the expression
\begin{equation} \label{def of Omega interms of obstacle}
\Omega(z) = V(z)-\Check{V}(z).
\end{equation}  
Therefore, one observes that the large deviation rate function admits a natural interpretation in terms of the obstacle function associated with the underlying potential.
\end{remark}

\subsection{Asymptotics of the one-point functions outside the droplet}

The key ingredient in the proof of Theorem~\ref{Thm. LDP maxEV_general domain} is the asymptotic behaviour of the one-point function of the elliptic Ginibre ensembles. 

For the eGinUE and eGinSE, we denote their one-point functions by $R_N^{\mathbb C}$ and $R_N^{\mathbb H}$, respectively. These are characterised by
\begin{equation} \label{eq. def 1-point func UE and SE}
\mathbb{E} \bigg[\sum_{j=1}^N f(z_j) \bigg] = \int_{\mathbb C} f(z) R_N^{\mathbb C}(z) \,d^2 z,
\qquad
\mathbb{E} \bigg[ \sum_{j=1}^N f(z_j) \bigg] = \int_{\mathbb C} f(z) R_N^{\mathbb H}(z)\,d^2 z,
\end{equation} 
for any suitable test function $f:\mathbb C\to\mathbb C$.
For the eGinOE, we denote by $R_N^{\mathbb R,\mathrm{r}}$ and $R_N^{\mathbb R,\mathrm{c}}$ the real and complex one-point functions, which are characterised by
\begin{equation} \label{eq. def 1-point func OE}
\mathbb{E} \bigg[ \sum_{z_j \in \mathbb R} f(z_j) \bigg] =\int_{\mathbb R} f(x) R_N^{\mathbb R,\mathrm{r}}(x)\,dx,
\qquad
\mathbb{E} \bigg[ \sum_{z_j \in \mathbb C\setminus\mathbb R} f(z_j) \bigg]= \int_{\mathbb C} f(z) R_N^{\mathbb R,\mathrm{c}}(z)\,d^2 z
\end{equation} 
for suitable test functions. 

In Propositions~\ref{Prop_eGinUE asymp}, \ref{Prop_eGinOE asymp}, and \ref{Prop_eGinSE asymp} below, we establish that as $N \to \infty$, 
the one-point functions exhibit exponential asymptotics 
outside the droplet $E$; see also Figures~\ref{Fig_numerics eGinUE}, ~\ref{Fig_numerics eGinOE} and ~\ref{Fig_numerics eGinSE}. 
More precisely, for $z \in E^{c}$,
\begin{equation} \label{leading order asymp of eGinUE eGinSE}
R_N^{ \mathbb{C} }(z) = \exp\Big( -N \Omega(z) +o(N) \Big),  \qquad R_N^{ \mathbb{H} }(z) = \exp\Big( -2N \Omega(z) +o(N) \Big)
\end{equation}
for the eGinUE and eGinSE, respectively, where $\Omega$ is given by \eqref{def of Omega explicit}. 
For the eGinOE, we have, for $x \in \mathbb R$ with $|x| > 1+\tau$, and for $z \in E^{c} \setminus \mathbb R$,
\begin{equation} \label{leading order asymp of eGinOE}
R_N^{\mathbb R,\mathrm{r}}(x) =  \exp\Big( -\frac{N}{2} \Omega(x) +o(N) \Big), \qquad R_N^{\mathbb R,\mathrm{c}}(z) =  \exp\Big( -N \Omega(z) +o(N) \Big). 
\end{equation}
These asymptotic behaviours explain why the function $\Omega$ naturally appears in the rate function in Theorem~\ref{Thm. LDP maxEV_general domain}.  

We stress here that the asymptotic behaviour of $R_N^{\mathbb R,\mathrm{r}}(x)$ was previously derived in \cite{BFK21,Fy16}; see also \cite[Remarks~3.4 and~3.10]{BL24}. 
In Propositions~\ref{Prop_eGinUE asymp}, \ref{Prop_eGinOE asymp}, and \ref{Prop_eGinSE asymp}, we derive not only the leading-order asymptotics \eqref{leading order asymp of eGinUE eGinSE} and \eqref{leading order asymp of eGinOE}, but also the precise expansion up to and including the constant-order term. 

We also emphasise that in the eGinOE case, the large deviation rate function originates from $R_N^{\mathbb R,\mathrm{r}}$, rather than from $R_N^{\mathbb R,\mathrm{c}}$. 
From a probabilistic point of view, this indicates that when the large deviation event occurs, it is typically a real eigenvalue that enters the unexpected domain $U$ (in Theorem~\ref{Thm. LDP maxEV_general domain}), rather than a complex eigenvalue; see \cite{XZ24} for a similar discussion. 
%Finally, by comparing the asymptotic behaviours of $R_N^{\mathbb C}$, $R_N^{\mathbb H}$,  and $R_N^{\mathbb R,\mathrm{r}}$, one observes that the value of $\beta$ in \eqref{def of beta for eGinibres} naturally emerges from the corresponding leading coefficients.

\begin{remark}[Generalisation to random normal matrix models]
For the symmetry class corresponding to the GinUE ($\beta=2$), 
the results presented above can be formulated in a more general framework, 
namely in the setting where the quadratic potential $|z|^2$ in 
\eqref{def of JPDF GinUE} is replaced by a general external potential $Q(z)$ 
satisfying suitable regularity and growth assumptions. 
Such models are known as random normal matrix models or determinantal Coulomb gases.

Owing to the determinantal structure and the representation in terms of planar orthogonal polynomials, one may derive that the associated one-point function $R_{N,Q}$ governs the asymptotic behaviour of the form
\begin{equation} \label{R(z) asymp general Q}
R_{N,Q}(z) = \exp\Big( -N\big(Q(z)-\check{Q}(z)\big) + o(N) \Big),
\end{equation}
for $z$ outside the droplet, where $\check{Q}$ denotes the obstacle function defined in \eqref{def of Q check}.

For a radially symmetric potential $Q(z)=Q(|z|)$, the obstacle function
can be computed explicitly using Jensen's formula; see e.g.
\cite[Theorem 6.1 in Section IV.6]{ST97} and \cite[Eq.~(2.4)]{BS23}.
In particular, when $Q(z)=|z|^2$, we have $\check{Q}(z)=2\log|z|+1$ and hence the asymptotic behaviour \eqref{R(z) asymp general Q} is consistent with the known result in the literature \cite[Proposition~1 with $\Gamma=2$]{CFTW15}. 

The upper bound of the form \eqref{R(z) asymp general Q} is indeed known under the name of the exterior estimate; see e.g. \cite[Lemma~3.3]{AKM19} and \cite[Section 4.1.1]{AHM15}. 
On the other hand, it is expected that the corresponding lower bound can be derived from recent development of the theory of planar orthogonal polynomials \cite{HW21}. 
Together, these results suggest that for the random normal matrix model, 
Theorem~\ref{Thm. LDP maxEV_general domain} is expected to hold with $\Omega$ replaced by $Q(z)-\check{Q}(z)$, which is consistent with our formulation \eqref{def of Omega interms of obstacle}.

Nevertheless, we do not pursue this direction further in the present work. Indeed, the above formulation relies crucially on the determinantal structure and therefore applies only to the case $\beta=2$. 
For the other symmetry classes ($\beta=1,4$), a comparable theory has not yet been developed. Instead, our focus here is on the elliptic Ginibre ensembles, where we are able to obtain a comprehensive understanding across all three symmetry classes. 
\end{remark}

\subsection*{Plan of the paper} The rest of this paper is organised as follows. In Section~\ref{Sec. 1-point func and summation formula}, we collect several exact identities for the one-point functions of the three symmetry classes of elliptic Ginibre ensembles. In Section~\ref{Section_large N}, we combine these identities with asymptotic properties of the relevant orthogonal polynomials to derive the large-$N$ behaviour of the one-point functions; see Propositions~\ref{Prop_eGinUE asymp}, \ref{Prop_eGinOE asymp}, and \ref{Prop_eGinSE asymp}. Finally, these results are used to establish our main theorems, Theorems~\ref{Thm. LDP maxEV} and \ref{Thm. LDP maxEV_general domain}.

\section{Exact formulas for finite-\texorpdfstring{$N$}{N}} \label{Sec. 1-point func and summation formula}

As mentioned above, the availability of exact finite-$N$ formulas plays a crucial role in enabling a precise asymptotic analysis. 
For the elliptic Ginibre ensembles, substantial progress has been made in the exact computation of correlation functions, in particular through the underlying determinantal and Pfaffian integrable structures.  

The finite-$N$ analysis we discuss here proceeds in two main steps. The first step is to obtain explicit representations of the one-point functions. 
Such representations have been developed using the determinantal and Pfaffian structures of the ensembles, together with the orthogonal and skew-orthogonal polynomial formalism; see in particular \cite{BS09,FN08} and \cite{Ka02} for the integrable structures of the Pfaffian point processes associated with the eGinOE and eGinSE, respectively. 
Within these frameworks, the one-point functions can be expressed explicitly in terms of large summations involving Hermite polynomials.
However, these summation formulas are not immediately amenable to asymptotic analysis, as their large summation structure obscures the dominant contributions. 
To overcome this difficulty, in the second step, we adopt the approach developed in \cite{LR16,BE23,FN08,BKLL23}, which avoids a direct analysis of the large summations. Instead, it exploits an appropriate operator framework to compute the one-point functions in a more tractable form.  

\subsection{Planar orthogonal and skew-orthogonal polynomials}

To introduce the integrable structures underlying the eigenvalue distributions of the elliptic Ginibre ensembles, we begin by fixing some notation. Recall that the external potential $V$ is given by \eqref{def of V eGinibre}. We then define the associated weight functions
\begin{equation} \label{def of omega eGinUE eGinSE}
    \omega_N^\C(z):=  e^{-NV(z)}, \qquad  \omega_N^\HH(z) := e^{ -2NV(z) },
\end{equation}
corresponding to the eGinUE and eGinSE, respectively.  
For the eGinOE, the weight function is slightly more involved and takes the form
\begin{equation} \label{def of omega eGinOE}
 \omega_N^\R(z) := e^{\frac{N}{1+\tau} ((\im z)^2-(\re z)^2) } \erfc\Big( \sqrt{\frac{2N}{1-\tau^2}} \ \im z  \Big),
\end{equation}
for $\im z\geq0$, and defined by the symmetry $z\mapsto\overline{z}$ for $\im z<0$, where 
$$
\erfc(z) = \frac{2}{ \sqrt{\pi} } \int_z^\infty e^{-t^2}\,dt
$$
is the complementary error function. 

In addition to the weight functions, key ingredients in the description of the one-point functions are the families of associated planar orthogonal and skew-orthogonal polynomials. For the elliptic Ginibre ensembles, these polynomials can be expressed in terms of the Hermite polynomials 
\begin{equation}
H_n(x)  := (-1)^n e^{x^2} \frac{d^n}{dx^n} e^{-x^2}. 
\end{equation}
For a positive integer $N$ and a non-negative integer $j$, it is convenient to introduce the monic polynomials 
\begin{equation} \label{def of monic Hermite phi}
 \phi_{N,j}(z):= \Big( \frac{\tau}{2N} \Big)^{j/2} H_j\Big( \sqrt{\frac{N}{2\tau}} z \Big). 
\end{equation}

With this notation in place, we first define the orthogonal polynomials and their corresponding norms associated with the eGinUE: 
\begin{equation} \label{def of OP eGinUE}
    p_{N,j}^\C(z) := \phi_{N,j}(z), \qquad h_{N,j}^\C := \frac{j!}{N^{j+1}} \pi\sqrt{1-\tau^2}.
\end{equation}
For the eGinSE and eGinOE, the situation is more involved, as the relevant families form skew-orthogonal polynomials, whose construction requires a separate analysis. 
For the eGinSE, we define
\begin{equation} \label{def of SOP eGinSE}
p_{N,2j+1}^{\tS}(z) := \phi_{2N,2j+1}(z), \qquad p_{N,2j}^{\tS}(z) := \sum_{l=0}^{j}\frac{(2j)!!}{(2l)!!}\frac{1}{(2N)^{j-l}}\phi_{2N,2l}(z). 
\end{equation}
It was shown in \cite{Ka02} (see also \cite{AEP22}) that these form a family of skew-orthogonal polynomials (we omit the precise definition here), with skew-norm 
\begin{equation} \label{def of skew norm eGinSE}
h_{N,j}^{\tS} := \frac{(2j+1)!}{(2N)^{2j+2}}2(1-\tau)\pi\sqrt{1-\tau^2}. 
\end{equation} 
Finally, for the eGinOE, the skew-orthogonal polynomials are given in \cite{FN08} by 
\begin{equation}  \label{eq. eGinOE SOP def}
p_{N,2j}^\R(z) := \phi_{N,2j}(z), \qquad p_{N,2j+1}^\R(z) := \phi_{N,2j+1}(z) - \frac{2j}{N} \phi_{N,2j-1}(z),
\end{equation}
with skew-norm 
\begin{equation} \label{def of skew norm eGinOE}
h_{N,j}^\R := \frac{(2j)!}{N^{2j+3/2}}\sqrt{2\pi} (1+\tau). 
\end{equation} 

These families of polynomials introduced above serve as the building blocks for expressing the one-point functions of the eGinUE, eGinSE, and eGinOE. 

\subsection{Exact identities of the eGinUE correlation functions}

We begin with the exact formulas for the eGinUE case. Using \eqref{def of OP eGinUE}, it is convenient to introduce  
\begin{equation} \label{def of KNM eGinUE}
    K_{N,M}^\C(z,w) := \sum_{j=0}^{M-1} \frac{p_{N,j}^\C(z) p_{N,j}^\C(w)}{h_{N,j}^\C}. 
\end{equation}
It is well known that the eigenvalues of the eGinUE form a determinantal point process with (unweighted) correlation kernel
\begin{equation} \label{def of KNN eGinUE}
K_N^\C(z,w) := K_{N,N}^\C(z,w). 
\end{equation} 
Consequently, the one-point function $R_N^{\mathbb C}(z)$ admits the representation 
\begin{equation} \label{eq. 1-point func eGinUE R-N-C}
    R_N^\C(z) = \omega_N^\C(z) K_N^\C(z,\overline{z}) , 
\end{equation}
where $\omega_N^\C$ is given by \eqref{def of omega eGinUE eGinSE}. Such a determinantal formula has been used to derive several interesting properties of the eGinUE, including two-point observables; see e.g. \cite{SKSK24}.

The main idea due to Lee and Riser in \cite[Proposition 2.3]{LR16} for the asymptotic analysis of the correlation kernel is the following: upon applying a suitable differential operator, the kernel can be rewritten in terms of only the last few orthogonal polynomials of highest degree. We recall the precise statement. 

\begin{lemma}[cf. Proposition 2.3 in \cite{LR16}] \label{lem. CD eGinUE}
    For any positive integers $N$ and $M$, we have
    \begin{equation} \label{eq. CD formula K-N-M-C}
        \frac{\partial}{\partial z} \Big[ K_{N,M}^\C(z,\overline{z}) \omega_N^\C(z) \Big] =\frac{N}{1-\tau^2}  \frac{1}{h_{N,M-1}^\C} \Big( \tau p_{N,M}^\C(z) \overline{p_{N,M-1}^\C(z)} - p_{N,M-1}^\C(z) \overline{p_{N,M}^\C(z)} \Big) \omega_N^\C(z).
    \end{equation}
    In particular, we have 
    \begin{equation} \label{CD eGinUE}
        \frac{\partial}{\partial z} R_N^\C(z) = \frac{N}{1-\tau^2} \frac{1}{h_{N,N-1}^\C} \Big(\tau p_{N,N}^\C(z)\overline{p_{N,N-1}^\C(z)} - p_{N,N-1}^\C(z)\overline{p_{N,N}^\C(z)}\Big) \omega_N^\C(z) .
    \end{equation}
\end{lemma}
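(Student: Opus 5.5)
The plan is to prove \eqref{eq. CD formula K-N-M-C} by a direct telescoping computation based on two classical Hermite identities. The second statement \eqref{CD eGinUE} then follows by setting $M=N$ and using \eqref{eq. 1-point func eGinUE R-N-C}.

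First, I will record the structural facts about the monic polynomials \eqref{def of monic Hermite phi}. From $H_j'=2jH_{j-1}$ and $H_{j+1}(x)=2xH_j(x)-2jH_{j-1}(x)$ one gets
\begin{equation*}
\phi_{N,j}'(z)=j\,\phi_{N,j-1}(z),\qquad z\,\phi_{N,j}(z)=\phi_{N,j+1}(z)+\frac{\tau j}{N}\,\phi_{N,j-1}(z).
\end{equation*}
These polynomials have real coefficients, so $\overline{p^\C_{N,j}(z)}=p^\C_{N,j}(\bar z)$ satisfies the same recurrence with $z$ replaced by $\bar z$. From \eqref{def of OP eGinUE} the norms satisfy $h^\C_{N,j}=\frac{j}{N}h^\C_{N,j-1}$, that is, $j/h^\C_{N,j}=N/h^\C_{N,j-1}$. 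Finally, since $V(z)=(z\bar z-\tfrac{\tau}{2}(z^2+\bar z^2))/(1-\tau^2)$, we have
\begin{equation*}
\partial_z\omega^\C_N=-\frac{N}{1-\tau^2}(\bar z-\tau z)\,\omega^\C_N.
\end{equation*}

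Next, I will write $K^\C_{N,M}(z,\bar z)=\sum_{j<M}p_j(z)\overline{p_j(z)}/h_j$, dropping the indices $N$ and $\C$. Treating $z$ and $\bar z$ as independent variables, $\partial_z$ falls only on $p_j(z)$ and on $\omega$. This gives
\begin{equation*}
\frac{\partial_z[K\omega]}{\omega}=\sum_{j=0}^{M-1}\Big[\frac{j\,p_{j-1}\bar p_j}{h_j}-\frac{N}{1-\tau^2}\,\frac{(\bar z-\tau z)\,p_j\bar p_j}{h_j}\Big].
\end{equation*}
In the second term I will substitute the recurrence for $\bar z\bar p_j$ and for $z p_j$. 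I then introduce
\begin{equation*}
a_j:=\frac{p_j\bar p_{j+1}}{h_j},\qquad b_j:=\frac{p_{j+1}\bar p_j}{h_j},\qquad a_{-1}=b_{-1}=0.
\end{equation*}
Using the norm ratio, each summand becomes
\begin{equation*}
N a_{j-1}-\frac{N}{1-\tau^2}\big(a_j+\tau b_{j-1}-\tau b_j-\tau^2 a_{j-1}\big).
\end{equation*}

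The final step is the telescoping. The $b$-terms collapse to $\frac{N\tau}{1-\tau^2}b_{M-1}$. The coefficients of $a_{j-1}$ combine to $N\big(1+\frac{\tau^2}{1-\tau^2}\big)=\frac{N}{1-\tau^2}$. That sum cancels against $-\frac{N}{1-\tau^2}\sum_{j=0}^{M-1}a_j$ except for the top term, which leaves $-\frac{N}{1-\tau^2}a_{M-1}$. Hence
\begin{equation*}
\frac{\partial_z[K\omega]}{\omega}=\frac{N}{1-\tau^2}\,\frac{1}{h_{M-1}}\big(\tau\,p_M\bar p_{M-1}-p_{M-1}\bar p_M\big),
\end{equation*}
which is exactly \eqref{eq. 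CD formula K-N-M-C}.

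There is no real obstacle here. The only delicate points are bookkeeping ones: using the correct shift in the norm ratio, and handling the boundary terms at $j=0$, where $p_{-1}=0$, and at $j=M-1$ correctly.
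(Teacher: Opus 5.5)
Your proof is correct: the derivative rule $\phi_{N,j}'=j\,\phi_{N,j-1}$, the three-term recurrence, the norm ratio $j/h^\C_{N,j}=N/h^\C_{N,j-1}$ and both telescoping sums (the $b$-terms collapsing to $\tfrac{N\tau}{1-\tau^2}b_{M-1}$ and the $a$-terms to $-\tfrac{N}{1-\tau^2}a_{M-1}$) all check out, and for $\tau=0$ the same identities hold directly since $\phi_{N,j}(z)=z^j$. The paper does not prove this lemma itself but cites Proposition 2.3 of \cite{LR16}, and your argument is the standard direct computation from the Hermite recurrence and differentiation formulas that underlies that result.
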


This lemma provides an effective tool for analysing the large-$N$ asymptotic behaviour of the correlation kernel across various regimes (see also \cite{ADM22} for an alternative approach). Moreover, its applicability extends beyond the unitary case: as we will discuss in the following subsections, an analogous strategy can also be implemented for the other symmetry classes by exploiting suitable connection formulas.

\subsection{Exact identities of the eGinSE correlation functions} In contrast to the eGinUE whose eigenvalues form a determinantal point process, the eigenvalues of the eGinSE constitute a Pfaffian point process. The associated pre-kernel is given by 
\begin{align} \label{def of pre kernel eGinSE}
\kappa_N^\HH(z,w) := \sum_{j=0}^{N-1} \frac{p_{N,2j+1}^\HH(z)p_{N,2j}^\HH(w) - p_{N,2j}^\HH(z)p_{N,2j+1}^\HH(w)}{ h_{N,j}^\HH },
\end{align}
where $p_j^{\HH}$ and $h_{N,j}^{\HH}$ are defined in \eqref{def of SOP eGinSE} and \eqref{def of skew norm eGinSE}, respectively. 
Consequently, the one-point function $R_N^\HH(z)$ admits the representation
\begin{equation}
    R_N^\HH(z) = (\overline{z}-z) \omega_{N}^\HH(z) \kappa_N^\HH(z,\overline{z}),
\end{equation}
where the weight function $\omega_N^\HH(z)$ is given by \eqref{def of omega eGinUE eGinSE}.

Observe that, since the polynomials $ p_{N,j}^{\mathbb H} $ are expressed as finite linear combinations of Hermite polynomials, the pre-kernel \eqref{def of pre kernel eGinSE} admits a representation as a double summation. This additional layer of summation creates a further difficulty in the asymptotic analysis. 
To overcome this obstacle, the key idea developed in \cite[Proposition 1.1]{BE23} is to apply a suitable differential operator, thereby establishing a connection formula with the eGinUE kernel.

\begin{lemma}[cf. Proposition 1.1 and Lemma 3.2 in \cite{BE23}] \label{lem. CD formula for eGinSE kappa-N and E-N}
    For any positive integer $N$, we have
    \begin{equation}
        \Big(\frac{\partial}{\partial z} - \frac{2N}{1+\tau}z\Big) \kappa_N^\HH(z,w) = \frac{N}{(1-\tau^2)}\Big(K_{2N}^\C(z,w)-E_N(z,w)\Big),
    \end{equation}
    where $K_{2N}$ is given by \eqref{def of KNN eGinUE} with $N \mapsto 2N$ and
    \begin{equation}
    E_N(z,w) := \frac{1}{h_{2N,2N}^\C} p_{2N,2N}^\C(z) p_{N,2N-2}^\HH(w).
\end{equation}
   Moreover, we have 
    \begin{equation} 
        \Big(\frac{\partial}{\partial w} - \frac{2N}{1+\tau}w\Big) E_N(z,w) = -\frac{2N}{1+\tau}\frac{1}{h_{2N,2N}^{\C}} p_{2N,2N}^{\C}(z) p_{2N,2N-1}^{\C}(w).
    \end{equation}
\end{lemma}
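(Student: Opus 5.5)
The plan is a direct computation: apply the operator $D_z:=\partial_z-\frac{2N}{1+\tau}z$ termwise to the double sum \eqref{def of pre kernel eGinSE}, using only two Hermite identities, and reorganise the result into the eGinUE kernel plus a single boundary term.

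\textbf{Hermite identities.} Write $\phi_k:=\phi_{2N,k}$ and $c:=\frac{2N}{1+\tau}$. From \eqref{def of monic Hermite phi} and the classical relations $H_k'=2kH_{k-1}$ and $H_{k+1}=2xH_k-2kH_{k-1}$, one obtains
\begin{equation*}
\phi_k'(z)=k\,\phi_{k-1}(z), \qquad z\,\phi_k(z)=\phi_{k+1}(z)+\frac{k\tau}{2N}\,\phi_{k-1}(z).
\end{equation*}

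\textbf{Action of $D_z$ on the odd polynomials.} For the odd skew-orthogonal polynomials $p^{\HH}_{N,2j+1}=\phi_{2j+1}$ this gives
\begin{equation*}
D_z\phi_{2j+1}=-c\,\phi_{2j+2}+\frac{2j+1}{1+\tau}\,\phi_{2j},
\end{equation*}
since $1-\frac{c\tau}{2N}=\frac{1}{1+\tau}$.

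\textbf{Action of $D_z$ on the even polynomials.} From \eqref{def of SOP eGinSE} the even polynomials satisfy the recursion
\begin{equation*}
p^{\HH}_{N,2j}=\phi_{2j}+\frac{2j}{2N}\,p^{\HH}_{N,2j-2}.
\end{equation*}
I will prove by induction on $j$ that
\begin{equation*}
D_z p^{\HH}_{N,2j}=-c\,\phi_{2j+1}.
\end{equation*}
The base case is $D\phi_0=-cz=-c\phi_1$. For the inductive step, $D\phi_{2j}=-c\phi_{2j+1}+\frac{2j}{1+\tau}\phi_{2j-1}$, and the term $\frac{2j}{2N}\,D p^{\HH}_{N,2j-2}=-\frac{2j}{1+\tau}\phi_{2j-1}$ cancels the extra piece exactly.

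This identity already yields the second claim of the lemma. Indeed, $E_N(z,w)$ depends on $w$ only through $p^{\HH}_{N,2N-2}(w)$, and $D_w p^{\HH}_{N,2N-2}=-c\,\phi_{2N-1}(w)=-c\,p^{\C}_{2N,2N-1}(w)$.

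\textbf{First identity: substitution and telescoping.} Substituting both formulas into $D_z\kappa_N^{\HH}(z,w)$, the $j$-th summand becomes
\begin{equation*}
\frac{1}{h^{\HH}_{N,j}}\Big[\Big(-c\,\phi_{2j+2}(z)+\frac{2j+1}{1+\tau}\phi_{2j}(z)\Big)p^{\HH}_{N,2j}(w)+c\,\phi_{2j+1}(z)\,\phi_{2j+1}(w)\Big].
\end{equation*}
The odd-odd terms are already of the form $\phi_k(z)\phi_k(w)$ with $k$ odd. Checking the constants, one finds $\frac{c}{h^{\HH}_{N,j}}=\frac{N}{1-\tau^2}\cdot\frac{1}{h^{\C}_{2N,2j+1}}$ from the explicit norms \eqref{def of OP eGinUE} and \eqref{def of skew norm eGinSE}.

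For the even terms, I regroup $\phi_{2j}(z)$ across consecutive $j$. The coefficient of $\phi_{2j}(z)$ collects $\frac{2j+1}{(1+\tau)h^{\HH}_{N,j}}\,p^{\HH}_{N,2j}(w)$ from index $j$ and $-\frac{c}{h^{\HH}_{N,j-1}}\,p^{\HH}_{N,2j-2}(w)$ from index $j-1$. Using the norm ratio $h^{\HH}_{N,j}/h^{\HH}_{N,j-1}=\frac{(2j)(2j+1)}{(2N)^2}$, this combination is proportional to $p^{\HH}_{N,2j}(w)-\frac{2j}{2N}p^{\HH}_{N,2j-2}(w)=\phi_{2j}(w)$, with proportionality constant $\frac{N}{(1-\tau^2)h^{\C}_{2N,2j}}$.

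After this regrouping, the sum over $j=0,\dots,N-1$ reproduces $\frac{N}{1-\tau^2}K^{\C}_{2N}(z,w)$. The only unmatched piece is the top term $-\frac{c}{h^{\HH}_{N,N-1}}\phi_{2N}(z)\,p^{\HH}_{N,2N-2}(w)$, and one checks that it equals $-\frac{N}{1-\tau^2}E_N(z,w)$.

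\textbf{Expected obstacle.} The main difficulty is the bookkeeping in this last step. The normalising constants must match exactly: $h^{\HH}$ contains a factor $2(1-\tau)$ while $c$ contains $1/(1+\tau)$, and these must combine to give the factor $\frac{N}{1-\tau^2}$. The boundary term at $j=N-1$ must also be identified precisely with $E_N$. I would first verify everything for $N=1$ as a sanity check before writing out the general telescoping.
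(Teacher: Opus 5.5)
Your proof is correct and complete. The two Hermite identities, the induction giving $D_z p^{\HH}_{N,2j}=-c\,\phi_{2N,2j+1}$ (which yields the second claim immediately), and the telescoping all check out. The bookkeeping you flagged also closes exactly: one has $\frac{c}{h^{\HH}_{N,j}}=\frac{N}{1-\tau^2}\frac{1}{h^{\C}_{2N,2j+1}}$ and $\frac{2j+1}{(1+\tau)h^{\HH}_{N,j}}=\frac{N}{1-\tau^2}\frac{1}{h^{\C}_{2N,2j}}$. The latter also holds at $j=0$, where there is no partner term and $p^{\HH}_{N,0}=\phi_{2N,0}=1$. Finally $\frac{c}{h^{\HH}_{N,N-1}}=\frac{N}{1-\tau^2}\frac{1}{h^{\C}_{2N,2N}}$, so the leftover top term is precisely $-\frac{N}{1-\tau^2}E_N(z,w)$.

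The paper gives no argument of its own for this lemma and simply imports it from \cite{BE23}. Your direct computation from the Hermite differentiation and three-term recurrence formulas therefore serves as a self-contained proof of that cited Christoffel--Darboux-type identity.
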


Combined with Lemma~\ref{lem. CD eGinUE}, this reduction yields a representation that is amenable to asymptotic analysis. This strategy has been applied to derive various scaling limits of the eGinSE correlation functions; see \cite{BE23,BES23}.

\subsection{Exact identities of the eGinOE correlation functions}

Finally, we turn to the integrable structure of the eGinOE correlation functions.  
The eigenvalues of the eGinOE form a Pfaffian point process. However, since the spectrum consists of both real and complex eigenvalues, the one-point functions for the real and complex parts must be treated separately. 
In addition, the structure of the Pfaffian kernel depends on the parity of the dimension $N$. In what follows, we restrict our attention to the case where $N$ is even.

The correlation functions of classical asymmetric random matrices with real entries can be expressed as a rank-one perturbation of their complex counterparts; see \cite{FN08,BN25}. 
For the one-point function of the real eigenvalues, it was shown in \cite{FN08} that
\begin{equation}
    R_N^{\R, \rmr}(x) = \sqrt{\frac{1-\tau^2}{2\pi N}} \omega_N^\R(x) K_{N,N-1}^\C(x,x) + \frac{1}{h_{N,N-2}^\R} \sqrt{\omega_N^\R(x)} \phi_{N,N-1}(x) \int_0^x du \sqrt{\omega_N^\R(u)} \phi_{N,N-2}(u),
\end{equation}
where $\omega_N^\R$ and $K_{N,M}^\C(z,w)$ are defined in \eqref{def of omega eGinOE} and \eqref{def of KNM eGinUE}, respectively. 
Using this, various asymptotic properties of the real eigenvalues such as the limiting density and scaling limits were derived in \cite{FN08,BKLL23,BL24}.  

For the one-point function of the complex eigenvalues, it was also shown in \cite{FN08} (for even $N$) that
\begin{equation} \label{eq. 1-point func R-N-R-rmc as S-N-rmc}
    R_N^{\R, \rmc}(z) =  \omega_N^\R(z) S_N^\rmc(\overline{z},z)
\end{equation}
for $\im z>0$, and by the symmetry $z\mapsto\overline{z}$ for $\im z<0$, where
\begin{align}
    S_N^\rmc(z,w) := i \sum_{j=0}^{N/2-1} \frac{p_{N,2j+1}^\R(z) p_{N,2j}^\R(w) - p_{N,2j+1}^\R(w) p_{N,2j}^\R(z)}{ h_{N,j}^\R}.  \label{eq. S-N-c def}
\end{align}
Here, $p_{N,j}^\R$ and $h_{N,j}^\R$ are given by \eqref{eq. eGinOE SOP def} and \eqref{def of skew norm eGinOE}, respectively.
 
The following lemma is obtained in \cite{AP14} showing that $ R_N^{\mathbb R,\mathrm c} $ likewise admits a representation as a finite-rank perturbation of its complex counterpart. 

\begin{lemma}[cf. Eq.~(5.18) in \cite{AP14}] \label{lem. CD formula eGinOE S-N-c}
    For even $N$, we have
    \begin{equation}
        S_N^\rmc(z,w) = i \sqrt{\frac{\pi N}{2 (1-\tau^2)}}  \bigg( (z-w)K_{N,N-1}^\C(z, w) - \tau \frac{p_{N,N-1}^\C(z) p_{N,N-2}^\C(w) - p_{N,N-1}^\C(w) p_{N,N-2}^\C(z)}{ h_{N,N-2}^\C}  \bigg),
    \end{equation}
    where $K_{N,M}^\C$ is given in \eqref{def of KNM eGinUE}.
\end{lemma}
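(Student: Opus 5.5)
The identity in Lemma~\ref{lem. CD formula eGinOE S-N-c} is purely algebraic in the finite sums, so I would prove it directly from the explicit polynomials. Recall $p_{N,2j}^\R=\phi_{N,2j}$ and $p_{N,2j+1}^\R=\phi_{N,2j+1}-\frac{2j}{N}\phi_{N,2j-1}$, with $h^\R_{N,j}=\frac{(2j)!}{N^{2j+3/2}}\sqrt{2\pi}(1+\tau)$. On the right-hand side, $h^\C_{N,k}=\frac{k!}{N^{k+1}}\pi\sqrt{1-\tau^2}$. The prefactor then gives
\begin{equation*}
i\sqrt{\tfrac{\pi N}{2(1-\tau^2)}}\cdot\frac{N^{k+1}}{k!\,\pi\sqrt{1-\tau^2}} = \frac{i\,N^{k+3/2}}{k!\,\sqrt{2\pi}\,(1-\tau^2)}.
\end{equation*}
This matches the normalisation of $1/h^\R_{N,j}$ up to the factor $(1+\tau)/(1-\tau^2)=1/(1-\tau)$. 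That factor should be produced by the Hermite three-term recurrence, which involves $\tau$ through the scaling in $\phi_{N,j}$.

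\emph{Step 1 (recurrence).} From $H_{k+1}(x)=2xH_k(x)-2kH_{k-1}(x)$ and \eqref{def of monic Hermite phi}, I get
\begin{equation*}
z\,\phi_{N,k}(z)=\phi_{N,k+1}(z)+\frac{\tau k}{N}\phi_{N,k-1}(z).
\end{equation*}
Using this, I expand $(z-w)\phi_{N,k}(z)\phi_{N,k}(w)$ as
\begin{equation*}
\phi_{N,k+1}(z)\phi_{N,k}(w)-\phi_{N,k}(z)\phi_{N,k+1}(w)+\frac{\tau k}{N}\big(\phi_{N,k-1}(z)\phi_{N,k}(w)-\phi_{N,k}(z)\phi_{N,k-1}(w)\big).
\end{equation*}
Write $A_k:=\phi_{N,k+1}(z)\phi_{N,k}(w)-\phi_{N,k}(z)\phi_{N,k+1}(w)$. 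Then $(z-w)\phi_{N,k}(z)\phi_{N,k}(w)=A_k-\frac{\tau k}{N}A_{k-1}$. Dividing by $h^\C_{N,k}$ and summing over $k=0,\dots,N-2$ gives
\begin{equation*}
(z-w)K^\C_{N,N-1}(z,w)=\sum_{k=0}^{N-2}\frac{N^{k+1}}{k!\,c}\,A_k-\tau\sum_{k=1}^{N-2}\frac{N^{k}}{(k-1)!\,c}\,A_{k-1},\qquad c=\pi\sqrt{1-\tau^2}.
\end{equation*}
Reindexing the second sum, it becomes $\tau\sum_{k=0}^{N-3}\frac{N^{k+1}}{k!\,c}A_k$. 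The combination is therefore
\begin{equation*}
(1-\tau)\sum_{k=0}^{N-3}\frac{A_k}{h^\C_{N,k}}+\frac{A_{N-2}}{h^\C_{N,N-2}}.
\end{equation*}
Subtracting $\tau A_{N-2}/h^\C_{N,N-2}$, which is exactly the boundary term in the lemma, leaves
\begin{equation*}
(1-\tau)\sum_{k=0}^{N-2}\frac{A_k}{h^\C_{N,k}}.
\end{equation*}

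\emph{Step 2 (matching the skew sum).} It remains to show that
\begin{equation*}
\sum_{j=0}^{N/2-1}\frac{p^\R_{2j+1}(z)p^\R_{2j}(w)-p^\R_{2j+1}(w)p^\R_{2j}(z)}{h^\R_{N,j}}
\end{equation*}
equals the appropriate multiple of $\sum_{k=0}^{N-2}A_k N^{k+1}/k!$. Substituting $p^\R_{2j+1}=\phi_{2j+1}-\frac{2j}{N}\phi_{2j-1}$, the $j$-th term splits into $A_{2j}$ and $-\frac{2j}{N}B_j$, where $B_j:=\phi_{2j-1}(z)\phi_{2j}(w)-\phi_{2j}(z)\phi_{2j-1}(w)=-A_{2j-1}$. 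So the term is
\begin{equation*}
\frac{N^{2j+3/2}}{(2j)!}\Big(A_{2j}+\frac{2j}{N}A_{2j-1}\Big)=N^{1/2}\Big(\frac{N^{2j+1}}{(2j)!}A_{2j}+\frac{N^{2j}}{(2j-1)!}A_{2j-1}\Big).
\end{equation*}
This is precisely the pair of consecutive terms $k=2j,\,2j-1$ of $N^{1/2}\sum_k N^{k+1}A_k/k!$. Summing over $j$ covers $k=0,\dots,N-2$, and the $j=0$ term has no $k=-1$ piece. This is where evenness of $N$ is used.

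\emph{Step 3 (constants).} I then collect the constants $\sqrt{2\pi}(1+\tau)$, $c$ and $(1-\tau)$ and check that they produce the prefactor $i\sqrt{\pi N/(2(1-\tau^2))}$. The main obstacle is bookkeeping: the sign conventions of $A_k$ versus the skew form, and the exact constant. The structure is telescoping and should close cleanly; if a constant disagrees, I would re-verify the recurrence coefficient $\tau k/N$ first.
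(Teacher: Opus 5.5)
Your proof is correct, and Step 3 closes with no discrepancy. By Step 1, the right-hand side equals $i\sqrt{\pi N/(2(1-\tau^2))}\,(1-\tau)\sum_{k=0}^{N-2}A_k/h^\C_{N,k}$. Since $i\sqrt{\pi N/(2(1-\tau^2))}\,(1-\tau)/(\pi\sqrt{1-\tau^2})=i\sqrt{N}/(\sqrt{2\pi}(1+\tau))$, this is
\[
\frac{i\sqrt{N}}{\sqrt{2\pi}\,(1+\tau)}\sum_{k=0}^{N-2}\frac{N^{k+1}}{k!}\,A_k .
\]
Step 2, with $1/h^\R_{N,j}=N^{2j+3/2}/((2j)!\sqrt{2\pi}(1+\tau))$, shows that $S_N^\rmc(z,w)$ is exactly the same expression.

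The sign conventions are also consistent. The boundary term in the lemma is precisely $\tau A_{N-2}/h^\C_{N,N-2}$, and the $j$-th skew summand is $A_{2j}+\frac{2j}{N}A_{2j-1}$. As a quick sanity check, at $N=2$ both sides reduce to $2i(z-w)/(\sqrt{\pi}(1+\tau))$.

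The paper gives no argument of its own for this lemma; it simply imports the identity from \cite{AP14}. Your route is therefore a self-contained alternative rather than a reconstruction of the paper's proof.

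What your approach buys:
\begin{itemize}
\item It shows the identity is pure finite-$N$ algebra driven by the three-term recurrence $z\phi_{N,k}=\phi_{N,k+1}+\frac{\tau k}{N}\phi_{N,k-1}$.
\item It explains the correction term as the unmatched top term $\tau A_{N-2}/h^\C_{N,N-2}$ left over from the telescoping.
\item It pins down where evenness of $N$ enters: the pairs $k=2j,\,2j-1$ must tile $\{0,\dots,N-2\}$.
\item It verifies the constants directly in the present normalisations of $h^\R_{N,j}$ and $h^\C_{N,k}$, which a bare citation leaves the reader to reconcile with the conventions of \cite{AP14}.
\end{itemize}
The citation, in turn, buys brevity.
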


\section{Large-\texorpdfstring{$N$}{N} asymptotic analysis} \label{Section_large N}

This section is devoted to the asymptotic behaviour of the one-point functions of the eigenvalues of the elliptic Ginibre ensembles outside the droplet, as well as to the proofs of Theorems~\ref{Thm. LDP maxEV} and 
\ref{Thm. LDP maxEV_general domain}.

For this analysis, we make use of asymptotic expansions of the Hermite polynomials, which we now recall.
For this purpose, we define 
\begin{equation} \label{def of psi}
    \psi(z) := \frac{z + \sqrt{z^2 - 4\tau}}{2}, 
\end{equation}
where the square root is taken with the principal branch on $\mathbb{C} \setminus [-2\sqrt{\tau}, 2\sqrt{\tau}]$. This function is the inverse of the Joukowsky transform. Namely, it is a conformal map from the exterior of the unit disc onto the exterior of the ellipse $E$.
Next, we define
\begin{equation} \label{def of g}
g(z) := \frac{z}{2\psi(z)} + \log\psi(z).
\end{equation} 
Then the function $\Omega$ in \eqref{def of Omega explicit} can be expressed as
\begin{equation}
\Omega(z) = V(z)-2\re g(z),
\end{equation}
where $V$ is given by \eqref{def of V eGinibre}.  
 
Asymptotic expansions of the Hermite polynomials are well known in the 
literature. We utilise the expansions given in \cite{DKMVZ99, LR16}. 
Recall that the monic polynomial $\phi_{N,j}$ is given by \eqref{def of monic Hermite phi}. 

\begin{lemma}[cf. Lemma 2.5 in \cite{LR16}] \label{Hermite asymptotics}
    Let $r$ be a fixed integer and $z \in \C \setminus E$. Then as $N \to \infty$, we have
    \begin{equation}\label{eq. Hermite asymptotics}
        \phi_{N,N+r}(z) = \sqrt{h_{N,N+r}^{\C}} \Big(\frac{N}{2\pi^3(1-\tau^2)}\Big)^{\frac{1}{4}}  \psi(z)^r \sqrt{\psi'(z)}  \, e^{N g(z)} \Big( 1 + O\Big( \frac{1}{N} \Big) \Big),
    \end{equation}
    where the error term is uniform within a set bounded away from $E$.
\end{lemma}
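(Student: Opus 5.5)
We plan a classical steepest-descent analysis of the contour-integral representation of the Hermite polynomials. Starting from the generating function $e^{2xt-t^2}=\sum_n H_n(x)t^n/n!$, we write
\begin{equation*}
H_n(x)=\frac{n!}{2\pi i}\oint_{|t|=\rho} e^{2xt-t^2}\,\frac{dt}{t^{n+1}} .
\end{equation*}
We take $x=\sqrt{N/(2\tau)}\,z$, substitute $t=\sqrt{N/(2\tau)}\,w$, and set $n=N+r$. Using \eqref{def of monic Hermite phi}, this gives
\begin{equation*}
\phi_{N,N+r}(z)=\Big(\frac{\tau}{N}\Big)^{N+r}\frac{(N+r)!}{2\pi i}\oint e^{N f(w)}\,\frac{dw}{w^{r+1}},\qquad f(w):=\frac{zw-w^2/2}{\tau}-\log w .
\end{equation*}
The saddle points solve $w^2-zw+\tau=0$, so they are $w=\psi(z)$ and $w=\tau/\psi(z)=z-\psi(z)$.

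The relevant saddle is $w_*=\tau/\psi(z)$. Using $\psi^2=z\psi-\tau$, a direct computation gives
\begin{equation*}
f(w_*)=\frac{z}{2\psi}+\frac12+\log\psi-\log\tau=g(z)+\frac12-\log\tau .
\end{equation*}
This produces the factor $e^{Ng(z)}$, and $w_*^{-r-1}$ produces the factor $\psi^{r+1}\tau^{-r-1}$. For the second derivative we have
\begin{equation*}
f''(w_*)=-\frac1\tau+\frac{\psi^2}{\tau^2}=\frac{\psi^2-\tau}{\tau^2}.
\end{equation*}
Differentiating $\psi^2=z\psi-\tau$ gives $\psi'=\psi^2/(\psi^2-\tau)$. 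The Gaussian integral $\sqrt{2\pi/(N f''(w_*))}$ therefore equals $\tau\sqrt{2\pi/N}\,\psi^{-1}\sqrt{\psi'}$, up to the orientation sign. Combining the prefactor $(\tau/N)^{N+r}(N+r)!\,e^{N/2}\tau^{-N}$ with Stirling's formula for $(N+r)!$, and with $h^{\C}_{N,N+r}$ from \eqref{def of OP eGinUE}, should then give exactly
\begin{equation*}
\sqrt{h^\C_{N,N+r}}\,\Big(\frac{N}{2\pi^3(1-\tau^2)}\Big)^{1/4}\psi^r\sqrt{\psi'} .
\end{equation*}
The usual next-order terms in Laplace's method then give the relative error $O(1/N)$. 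We will check the constant bookkeeping (powers of $\tau$, $N$, $2\pi$) carefully but regard it as routine.

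The main obstacle is justifying the contour deformation. We must show that the closed loop around $0$ can be deformed into one that passes through $w_*$ along the steepest-descent direction, on which $\re f(w)<\re f(w_*)$ away from $w_*$. We must also show that the other saddle $\psi(z)$ contributes only exponentially smaller terms. For $z\notin E$ we have $|\psi(z)|>1$, hence $|w_*|<\tau<|\psi|$, and the two saddles are separated. We would first try to verify directly that the circle $|w|=|w_*|$, or a suitable deformation of it, carries the maximum of $\re f$ only at $w_*$. If that fails, we would describe the steepest-descent path from $w_*$ by the level-line topology of $\im f$: since $\re f\to-\infty$ both as $w\to0$ (through the $-\log$ term's companion $w^{-r-1}$-free part) and as $|w|\to\infty$ in the sectors where $\re w^2>0$, the path through $w_*$ closes up around the origin. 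Uniformity on sets bounded away from $E$ follows because $|\psi(z)|\ge 1+\delta$ there, which keeps the two saddles uniformly separated and $f''(w_*)$ bounded away from $0$. As a fallback, we would note that this is the standard Plancherel--Rotach asymptotics outside the oscillatory interval, as in \cite{DKMVZ99}, rescaled to the present variables.
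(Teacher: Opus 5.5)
Your proposal is correct, and it takes a genuinely different route from the paper. The paper gives no argument of its own: it imports the outer Plancherel--Rotach (Riemann--Hilbert) strong asymptotics of the Hermite polynomials from \cite{DKMVZ99}, in the rescaled form of \cite[Lemma 2.5]{LR16}. That is your fallback. Your saddle-point analysis of the generating-function integral is self-contained instead.

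\textbf{Constants and orientation.} The bookkeeping works out exactly. By Stirling, $(N+r)!=\sqrt{2\pi N}\,N^{N+r}e^{-N}(1+O(1/N))$. So both your saddle contribution $N^{-N-r}(N+r)!\,e^{N/2}(2\pi N)^{-1/2}\psi^r\sqrt{\psi'}e^{Ng}$ and the stated right-hand side reduce to $e^{-N/2}\psi^r\sqrt{\psi'}e^{Ng}(1+O(1/N))$. There is also no orientation ambiguity: on the counterclockwise circle, $dw=iw\,d\eta$, so the factor is exactly $+1/(2\pi)$.

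\textbf{The contour step closes with your first attempt.} Write $w=w_*e^{i\eta}$ and $\epsilon:=\tau/\psi(z)^2$. Then $z=\psi+\tau/\psi$ gives
\[
f(w)-f(w_*)=\big(e^{i\eta}-1-i\eta\big)-\tfrac{\epsilon}{2}\big(e^{i\eta}-1\big)^2,\qquad \re\big[f(w)-f(w_*)\big]=-2\sin^2(\eta/2)\big(1-\re(\epsilon e^{i\eta})\big),
\]
with $|\epsilon|<\tau$. Hence on $|w|=|w_*|$ the maximum of $\re f$ is attained only at $w_*$, with gap at least $2(1-\tau)\sin^2(\eta/2)$.

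\textbf{Uniformity.} In the $\eta$-variable, $z$ enters only through $\epsilon$, which lies in the fixed compact disc $|\epsilon|\le\tau$. The Gaussian factor is $\sqrt{2\pi/(N(1-\epsilon))}=\sqrt{2\pi\psi'/N}$, using the principal branch since $\re(1-\epsilon)>0$. So the $O(1/N)$ bound is uniform on all of $\C\setminus E$, including unbounded sets. Your stated justification (separated saddles and $f''(w_*)\neq0$) does not by itself cover unbounded sets. As $z\to\infty$, $f''(w_*)\to\infty$ and $w_*$ approaches the pole at $0$; the rescaled variable $\eta$ removes this issue.

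\textbf{What each route buys.} Yours gives an elementary proof with explicit, in fact stronger, uniformity. The citation is shorter and gives access to the other regimes of the Hermite asymptotics.

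\textbf{Two small corrections.}
\begin{enumerate}[label=(\roman*)]
\item In your fallback, the sign is wrong. Since $-\re\log w=-\log|w|$, one has $\re f\to+\infty$ as $w\to0$: the origin is a peak, not a valley. The steepest-descent path from $w_*$ therefore cannot close up at the origin. It must escape to infinity in the sectors $\re w^2>0$, or run into the other saddle. As written that argument fails, but the circle makes it unnecessary.
\item Your change of variables divides by $\tau$. Substituting $w=\tau u$ gives
\[
\phi_{N,n}(z)=\frac{n!}{N^n}\,\frac{1}{2\pi i}\oint e^{N(zu-\tau u^2/2)}u^{-n-1}\,du,
\]
which also covers $\tau=0$. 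There $\phi_{N,n}(z)=z^n$, and the claim is just Stirling.
\end{enumerate}
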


\subsection{Asymptotics of the one-point functions}
 
We now present the asymptotic behaviours of the one-point functions in the following Propositions~\ref{Prop_eGinUE asymp}, ~\ref{Prop_eGinOE asymp} and ~\ref{Prop_eGinSE asymp} for each eGinUE, eGinOE and eGinSE, respectively. 
We begin with the eGinUE.  

\begin{proposition}[\textbf{Asymptotics of the one-point function for the eGinUE}] \label{Prop_eGinUE asymp}
Let $\tau \in [0,1)$ be fixed. Let $N$ be a positive integer and $z \in \C \setminus E$.
    Then as $N \to \infty$, we have
    \begin{equation}
        R_N^\C(z) =  \sqrt{ \frac{N}{2\pi^3 (1-\tau^2)} } \frac{|\psi'(z)|}{ |\psi(z)|^2 } \frac{\re\psi(z)}{ \re [ \psi(z) - \psi(z)^{-1}]  } e^{- N \Omega(z)} \Big(1+O\Big(\frac{1}{N}\Big) \Big),
    \end{equation}
    where $\psi$ and $\Omega$ are defined in \eqref{def of psi} and \eqref{def of Omega explicit}.
    The error term is uniform within a set bounded away from $E$.
\end{proposition}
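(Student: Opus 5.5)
The plan is to integrate the Christoffel–Darboux type identity \eqref{CD eGinUE} of Lemma~\ref{lem. CD eGinUE} along a path from $z$ to infinity, and to feed in the Hermite asymptotics of Lemma~\ref{Hermite asymptotics}. Since $R_N^\C(z)=\omega_N^\C(z)K_N^\C(z,\bar z)$ is a polynomial times a Gaussian with $V(z)\gtrsim|z|^2$, it decays at infinity. Hence
\begin{equation*}
R_N^\C(z) = -\int_{z}^{\infty} \partial_\zeta R_N^\C(\zeta)\,d\zeta + \text{(an antiholomorphic correction)} .
\end{equation*}
To avoid the $\bar\partial$ issue, I would instead integrate along a real direction. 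Since $R_N^\C$ is real, $\partial_t R_N^\C(z+te^{i\theta}) = 2\re[e^{i\theta}\partial_z R_N^\C]$. Thus
\begin{equation*}
R_N^\C(z) = -2\int_0^\infty \re\big[e^{i\theta}\,\partial R_N^\C(z+te^{i\theta})\big]\,dt,
\end{equation*}
where the direction $\theta$ is chosen so that the ray stays in a set bounded away from $E$. A natural choice is the direction of steepest increase of $\Omega$, i.e. $e^{i\theta}$ parallel to $\overline{\partial\Omega(z)}$; the outward normal-type direction also works since $\Omega$ is increasing outside $E$.

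Next I substitute Lemma~\ref{Hermite asymptotics} with $r=0$ and $r=-1$ into the right-hand side of \eqref{CD eGinUE}. Using $h_{N,N}^\C/h_{N,N-1}^\C=N/N=1$ up to the exact ratio $N/N$, the product $\tau p_{N,N}\overline{p_{N,N-1}} - p_{N,N-1}\overline{p_{N,N}}$ becomes
\begin{equation*}
\sqrt{h_{N,N}^\C h_{N,N-1}^\C}\Big(\tfrac{N}{2\pi^3(1-\tau^2)}\Big)^{1/2}|\psi'|\,e^{2N\re g}\,\big(\tau\,\overline{\psi}^{-1}-\psi^{-1}\big)(1+O(1/N)).
\end{equation*}
Multiplying by $\omega_N^\C$ gives $e^{-N\Omega(\zeta)}$. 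The prefactor $\frac{N}{1-\tau^2}\sqrt{h_{N,N}/h_{N,N-1}}$ equals $\frac{N}{1-\tau^2}$ to leading order, with exact ratio $\sqrt{N\cdot N/N}$ to be checked. So $\partial R_N^\C(\zeta) = A_N(\zeta)e^{-N\Omega(\zeta)}(1+O(1/N))$ uniformly on the ray, where $A_N$ is of size $N^{3/2}$ and smooth in $\zeta$.

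Then Laplace's method applies at the endpoint $t=0$. Because $\Omega$ has nonzero derivative along the ray at $z$ (this holds for $z\notin E$, as $\partial\Omega = \partial V - g' \neq 0$ there; I expect to verify it via $\partial V(z) = (\bar z-\tau z)/(1-\tau^2)$ and $g'(z)=1/\psi(z)$), the endpoint contribution gives
\begin{equation*}
R_N^\C(z) = \frac{-2\re[e^{i\theta}A_N(z)]}{N\,\partial_t\Omega(z+te^{i\theta})|_{t=0}}\,e^{-N\Omega(z)}(1+O(1/N)).
\end{equation*}
Here $\partial_t\Omega = 2\re[e^{i\theta}\partial\Omega(z)]$. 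The remaining task is algebraic: show that
\begin{equation*}
\frac{\re[e^{i\theta}(\tau\bar\psi^{-1}-\psi^{-1})]}{\re[e^{i\theta}\partial\Omega]}
\end{equation*}
is independent of $\theta$ and equals the stated ratio $\frac{\re\psi}{|\psi|^2\re[\psi-\psi^{-1}]}$ up to constants. The natural guess is that $\tau\bar\psi^{-1}-\psi^{-1}$ is a real multiple of $\partial\Omega$. Indeed, using $z=\psi+\tau/\psi$, we get $(1-\tau^2)\partial\Omega = \bar\psi+\tau/\bar\psi-\tau\psi-\tau^2/\psi-(1-\tau^2)/\psi = \bar\psi-\tau\psi+\tau/\bar\psi-1/\psi$. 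I would compare this to $\tau/\bar\psi-1/\psi$ after multiplying by $|\psi|^2$: $(1-\tau^2)|\psi|^2\partial\Omega = \psi(|\psi|^2-1)+\tau\bar\psi(1-|\psi|^2)$, while $|\psi|^2(\tau/\bar\psi-1/\psi) = \tau\psi-\bar\psi$. These are not real multiples in general, so the $\theta$-independence must come from the freedom in taking real parts. I will therefore choose $\theta$ to make both quantities real-aligned, or simply take $e^{i\theta}$ along $\overline{\partial\Omega}$, and then simplify to the claimed expression.

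The main obstacle is this final algebraic identification of the prefactor, together with uniformity of the $O(1/N)$ error along the whole ray, including as $t\to\infty$. For the latter, Lemma~\ref{Hermite asymptotics} is uniform only on bounded sets. For large $t$ I will instead use the crude bound $|\partial R_N|\le e^{-cN|\zeta|^2}$, which holds because $V$ grows quadratically while $\log|\psi|$ grows logarithmically, so that region contributes an exponentially smaller error.
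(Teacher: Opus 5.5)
Your strategy is the paper's: integrate \eqref{CD eGinUE} along a ray from $z$ to infinity, insert Lemma~\ref{Hermite asymptotics}, and apply Laplace's method at the endpoint. The paper uses the horizontal ray after reducing to $\re z\ge 0$, so that $2\re[\partial_z R_N^\C]=\partial_x R_N^\C$. However, the step you single out as the main obstacle contains an algebra slip, and the conclusion you draw from it is wrong.

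When you multiplied by $|\psi|^2$ you swapped $\psi$ and $\overline{\psi}$. Since $|\psi|^2\overline{\psi}^{-1}=\psi$ and $|\psi|^2\psi^{-1}=\overline{\psi}$, the correct expressions are
\[
(1-\tau^2)|\psi|^2\,\partial_z\Omega=(|\psi|^2-1)(\overline{\psi}-\tau\psi),\qquad |\psi|^2\big(\tau\overline{\psi}^{-1}-\psi^{-1}\big)=\tau\psi-\overline{\psi}.
\]
Hence $(1-\tau^2)\,\partial_z\Omega=(1-|\psi|^2)\big(\tau\overline{\psi}^{-1}-\psi^{-1}\big)$, so the two quantities \emph{are} real multiples of each other.

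This matters. The endpoint ratio $\re[e^{i\theta}A]/\re[e^{i\theta}B]$ is independent of $\theta$ exactly when $A/B\in\R$. Had the two not been proportional, your computation would assign different values to the single number $R_N^\C(z)$ depending on the ray. That would signal an error, not something absorbed by ``the freedom in taking real parts''. Nor can any single $\theta$ align two non-proportional complex numbers at once.

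With the correct identity, and with $h_{N,N}^\C/h_{N,N-1}^\C=1$ exactly (not $\sqrt N$), you get
\[
\partial_z R_N^\C(\zeta)=-N\,\partial_z\Omega(\zeta)\sqrt{\frac{N}{2\pi^3(1-\tau^2)}}\,\frac{|\psi'(\zeta)|}{|\psi(\zeta)|^2-1}\,e^{-N\Omega(\zeta)}\Big(1+O\Big(\frac1N\Big)\Big).
\]
For every admissible $\theta$, the endpoint Laplace formula then gives $R_N^\C(z)=\sqrt{\frac{N}{2\pi^3(1-\tau^2)}}\frac{|\psi'(z)|}{|\psi(z)|^2-1}e^{-N\Omega(z)}(1+O(1/N))$. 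This is the stated formula, since $|\psi|^2\re[\psi-\psi^{-1}]=(|\psi|^2-1)\re\psi$.

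Once repaired, your choice of the gradient direction is slightly better than the paper's horizontal ray. By the formulas for $\partial_x\Omega$ and $\partial_y\Omega$ in the proof of Lemma~\ref{Lem_omega for special U}, $\partial_x\Omega$ has the sign of $\re z$ and $\partial_y\Omega$ that of $\im z$. So along the gradient ray $|\re\zeta|$ and $|\im\zeta|$ increase, the ray moves away from $E$, and $\Omega$ is strictly increasing. Moreover $\partial_t\Omega|_{t=0}=2|\partial_z\Omega(z)|>0$ everywhere off $E$. The horizontal ray degenerates on the imaginary axis, where $\partial_x\Omega=0$ and the stated prefactor is $0/0$ (to be read as $1/(|\psi|^2-1)$). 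Your explicit crude bound for the far part of the ray also fills in a point the paper leaves implicit.
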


See Figure~\ref{Fig_numerics eGinUE} for a numerical confirmation of Proposition~\ref{Prop_eGinUE asymp}. 

\begin{figure}[t]
    \begin{subfigure}{0.4\textwidth}
        \begin{center}
            \includegraphics[width=\linewidth]{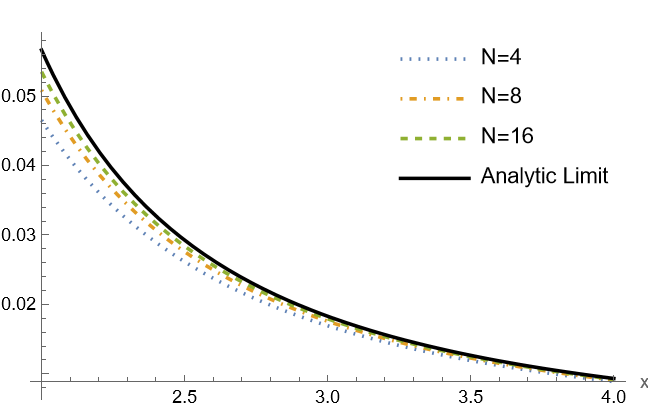}
        \end{center}
        \subcaption{  $z=x+ i \,0.35$. }
    \end{subfigure}
    \qquad
    \begin{subfigure}{0.4\textwidth}
        \begin{center}
            \includegraphics[width=\linewidth]{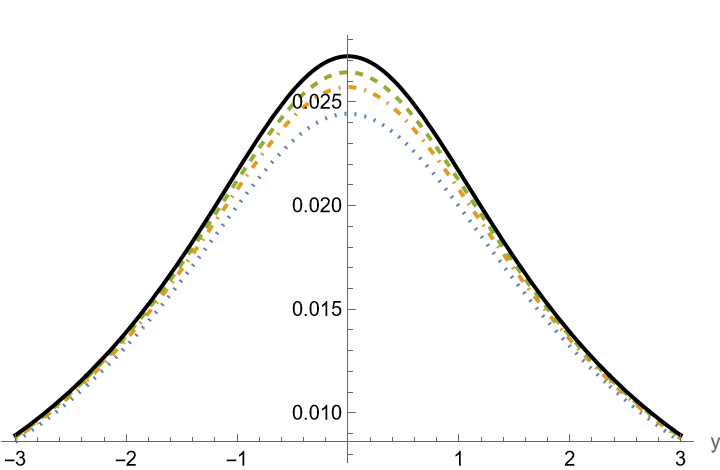}
        \end{center}
        \subcaption{ $z=2.6+iy$. }
    \end{subfigure}
     \caption{Plots of the normalised one-point function $R_N^\C(z) / (\sqrt{N} e^{-N \Omega(z)})$ for the eGinUE with $\tau=0.3$, compared with the analytic result from Proposition~\ref{Prop_eGinUE asymp}. The comparison is shown through cross-sections with either $x$ or $y$ fixed. } \label{Fig_numerics eGinUE}
\end{figure}

\begin{proof}[Proof of Proposition~\ref{Prop_eGinUE asymp}]
    We may assume that $\re z \ge 0$, since otherwise the conclusion follows by the symmetry $z \mapsto -\overline{z}$. 
Let us write $x=\re z$ and $y=\im z$. By combining Lemmas~\ref{lem. CD eGinUE} and ~\ref{Hermite asymptotics}, we have 
    \begin{align*}
        \frac{\partial}{\partial x} R_N^\C(z) &= - \frac{2N}{1+\tau} \omega_N^\C(z) \frac{1}{h_N^\C} \re\Big[ \phi_{N,N}(z) \overline{\phi_{N,N-1}(z)} \Big]
        \\
        &= - \frac{2N}{1+\tau} \sqrt{\frac{N}{2\pi^3(1-\tau^2)}} \frac{|\psi'(z)| \re\psi(z)}{|\psi(z)|^2} e^{-N \Omega(z)} \Big( 1 + O\Big( \frac{1}{N} \Big) \Big).
    \end{align*}
    Applying Laplace's method, we obtain
    \begin{align*}
        R_N^\C(z) &= - \int_x^\infty \frac{\partial}{\partial u} R_N^\C(u+iy) \,du
        \\
        &= \frac{2N}{1+\tau} \sqrt{\frac{N}{2\pi^3(1-\tau^2)}} \frac{|\psi'(z)| \re\psi(z)}{|\psi(z)|^2}  \frac{1}{N\frac{\partial}{\partial x} \Omega(z)}e^{-N \Omega(z)} \Big( 1 + O\Big( \frac{1}{N} \Big) \Big).
    \end{align*}
    We note that
    \begin{equation} \label{eq. partial-x omega}
        \frac{\partial}{\partial x} \Omega(z) = \frac{2}{1+\tau}  \re [ \psi(z) - \psi(z)^{-1}],
    \end{equation}
    which yields the desired conclusion. 
\end{proof}

Next, we present the asymptotic behaviour of the real and complex one-point functions for the eGinOE. As mentioned earlier, the asymptotic formula for $R_N^{\R,\rmr}(x)$ was already obtained in Proposition~2.1 of the supplementary material of \cite{BFK21}; see also \cite[Eqs.~(4.5), (4.6)]{Fy16}. 

\begin{proposition}[\textbf{Asymptotics of the one-point function for the eGinOE}] \label{Prop_eGinOE asymp}
 Let $\tau \in [0,1)$ be fixed and $N$ be an even positive integer.
    \begin{itemize}  
        \item \textup{(Real one-point function cf. \cite{BFK21,Fy16})} Let $x > 1+\tau$. Then as $N\to\infty$, we have
        \begin{equation}
            R_N^{\R,\rmr}(x) = \sqrt{ \frac{ N }{ 4\pi(1+\tau) } }  \frac{\sqrt {\psi'(x)}}{\psi(x)} e^{- N \Omega(x) /2} \Big( 1 + O\Big( \frac{1}{N} \Big) \Big),
        \end{equation}
        where $\psi$ and $\Omega$ are defined in \eqref{def of psi} and \eqref{def of Omega explicit}.
        The error term is uniform within a set bounded away from $[-1-\tau,1+\tau]$. 
       \smallskip 
        \item \textup{(Complex one-point function)} Let $z \in \C \setminus E$. Then as $N\to\infty$, we have
        \begin{equation} \label{eq. asymp 1-point func eGinOE cplx}
            R_N^{\R, \rmc}(z) = \frac{N}{ \pi (1-\tau^2) }  \frac{|\psi'(z)|}{ |\psi(z)|^4} \bigg| \frac{\im z \,\re \psi(z)}{\re[\psi(z) - \psi(z)^{-1}]}  - \tau \im \psi(z) \bigg| \, e^{2N \re g(z)}  \omega_N^\R(z) \Big( 1+ O\Big( \frac{1}{N} \Big) \Big),
        \end{equation}
        where $g$ is defined in \eqref{def of g}.
        The error term is uniform within a set bounded away from $E$.
    \end{itemize}
\end{proposition}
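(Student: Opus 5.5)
The proof splits along the two parts of Proposition~\ref{Prop_eGinOE asymp}, and each part reduces to the Hermite asymptotics of Lemma~\ref{Hermite asymptotics}.

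\textbf{Complex part.} This is the more direct of the two. By \eqref{eq. 1-point func R-N-R-rmc as S-N-rmc} and Lemma~\ref{lem. CD formula eGinOE S-N-c}, with $w=z$ (so that the $S_N^\rmc$ arguments are $(\overline z, z)$), we have
\[
R_N^{\R,\rmc}(z)=\omega_N^\R(z)\sqrt{\tfrac{\pi N}{2(1-\tau^2)}}\,\Big( 2\im z\, K_{N,N-1}^\C(\overline z,z) - \tfrac{2\tau}{h_{N,N-2}^\C}\im\big[\phi_{N,N-1}(z)\overline{\phi_{N,N-2}(z)}\big]\Big).
\]
Here I use that the Hermite polynomials have real coefficients. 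The kernel is then handled as in the proof of Proposition~\ref{Prop_eGinUE asymp}, with $N$ replaced by $M=N-1$ in Lemma~\ref{lem. CD eGinUE}.
\begin{itemize}
\item Differentiate $K_{N,N-1}^\C(z,\overline z)\,\omega_N^\C(z)$ in $x$.
\item Insert Lemma~\ref{Hermite asymptotics} for $\phi_{N,N-1}$ and $\phi_{N,N-2}$, which gives the integrand $\psi^{-1}\overline{\psi^{-2}}$ type factors, i.e.\ $\re\psi/|\psi|^4$ after the shift $r=-1,-2$.
\item Integrate from $x$ to $\infty$ by Laplace's method, using \eqref{eq. partial-x omega}.
\end{itemize}
This yields
\[
K_{N,N-1}^\C(z,\overline z)\,e^{-NV(z)}\sim \sqrt{\tfrac{N}{2\pi^3(1-\tau^2)}}\,\frac{|\psi'|\re\psi}{|\psi|^4\,\re[\psi-\psi^{-1}]}\,e^{-N\Omega(z)}.
\]
Equivalently, the weight factor $e^{-NV}$ is traded for $e^{2N\re g}$. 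The second term is read off directly from Lemma~\ref{Hermite asymptotics}:
\[
\frac{1}{h_{N,N-2}}\,\phi_{N,N-1}\overline{\phi_{N,N-2}} \sim \sqrt{\tfrac{N}{2\pi^3(1-\tau^2)}}\,|\psi'|\,\psi^{-1}\overline{\psi}^{-2}\,e^{2N\re g},
\]
whose imaginary part is $\im(\overline\psi)/|\psi|^4\cdot(\cdots)$, which up to sign gives $\im\psi/|\psi|^4$. Combining the prefactors, $\sqrt{\pi N/(2(1-\tau^2))}\cdot 2\sqrt{N/(2\pi^3(1-\tau^2))}=N/(\pi(1-\tau^2))$, which reproduces \eqref{eq. asymp 1-point func eGinOE cplx}.

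The delicate point in this part is that the two leading terms could cancel. The modulus in the statement suggests the combination is nonvanishing off $E$, and I would check that the bracket has a sign. If it vanished somewhere, the relative error $O(1/N)$ would not be uniform. I would verify this by writing everything in terms of $\psi$ with $z=\psi+\tau/\psi$, so that $\im z=\im\psi\,(1-\tau/|\psi|^2)$. A sign analysis for $|\psi|>1$ should then show both terms share the sign of $\im\psi$ after rearrangement.

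\textbf{Real part.} Use the formula of \cite{FN08} with $\omega_N^\R(x)=e^{-Nx^2/(1+\tau)}$.
\begin{itemize}
\item The first term is $\sqrt{\omega^\R}\cdot\big(\sqrt{\omega^\R}K\big)$, where $K\sqrt{\omega^\R}$ has size $e^{2N\re g-Nx^2/(1+\tau)/2}$. Since $\omega^\R\ne\omega^\C$ on $\R$ and $V(x)=x^2/(1+\tau)$, this term is $e^{-N\Omega(x)}$ times a polynomial factor. It is therefore exponentially smaller than $e^{-N\Omega/2}$, because $\Omega(x)>0$.
\item The dominant term is the second one. Its integral $\int_0^x\sqrt{\omega^\R(u)}\phi_{N,N-2}(u)\,du$ is dominated by the bulk region rather than by the endpoint.
\end{itemize}
My approach to the dominant term is to split $\int_0^x=\int_0^\infty-\int_x^\infty$. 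Using parity of $\phi_{N,N-2}$ (note $N-2$ is even), $\int_0^\infty$ is half of a full Gaussian–Hermite integral. This is computed exactly by the generating function or by the known moment $\int_\R e^{-Nu^2/(2(1+\tau))}H_{2k}(cu)\,du$, and it is explicit with Stirling asymptotics. The tail $\int_x^\infty$ contributes only $e^{-N\Omega(x)/2}\cdot e^{-N\Omega(x)/2}$ relative size after multiplying by $\sqrt{\omega^\R(x)}\phi_{N,N-1}(x)$, so it is subleading.

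Multiplying the exact constant by Lemma~\ref{Hermite asymptotics} for $\phi_{N,N-1}(x)$ and by $\sqrt{\omega^\R(x)}$ gives $e^{Ng(x)-Nx^2/(2(1+\tau))}=e^{-N\Omega(x)/2}$. The remaining prefactor must be checked against $\sqrt{N/(4\pi(1+\tau))}\sqrt{\psi'}/\psi$. This Stirling bookkeeping of the Gaussian–Hermite moment together with the skew norm $h_{N,N-2}^\R$ is the main computational obstacle. Alternatively, I would simply cite \cite{BFK21,Fy16} for this part, as the statement indicates.
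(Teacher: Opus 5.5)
Your complex part is the paper's own argument. You evaluate Lemma~\ref{lem. CD formula eGinOE S-N-c} at $(\overline z,z)$, apply Lemma~\ref{lem. CD eGinUE} with $M=N-1$ and then Laplace's method along the horizontal ray for $\im z\,K^\C_{N,N-1}$, and apply Lemma~\ref{Hermite asymptotics} directly to the rank-two correction, arriving at the same prefactor $N/(\pi(1-\tau^2))$.

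The genuine difference is the real part. The paper gives no argument there and simply cites \cite{BFK21,Fy16}, whereas you derive it from the formula of \cite{FN08}, and your route closes:
\begin{itemize}
\item The first term is $O(e^{-N\Omega(x)})$.
\item $\int_0^x\sqrt{\omega_N^\R}\,\phi_{N,N-2}$ equals the exact half-moment $\frac12\sqrt{2\pi(1+\tau)/N}\,(N-2)!/\big((N/2-1)!\,(2N)^{N/2-1}\big)$, up to an exponentially smaller tail.
\item Read the skew norm $h^\R_{N,N-2}$ in that formula as \eqref{def of skew norm eGinOE} with $j=N/2-1$, i.e.\ $(N-2)!\sqrt{2\pi}(1+\tau)/N^{N-1/2}$. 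Then Stirling and Lemma~\ref{Hermite asymptotics} for $\phi_{N,N-1}(x)$ give exactly $\sqrt{N/(4\pi(1+\tau))}\sqrt{\psi'(x)}/\psi(x)$.
\end{itemize}
The citation buys brevity. Your derivation is self-contained and shows why a real eigenvalue pays only $e^{-N\Omega/2}$: only $\sqrt{\omega_N^\R(x)}\phi_{N,N-1}(x)$ decays, while the integral is a bulk constant. This is the source of the factor $\frac12$ for the eGinOE in Theorem~\ref{Thm. LDP maxEV_general domain}.

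A few slips need correcting:
\begin{itemize}
\item On $\R$ one has $\omega_N^\R=\omega_N^\C$, not $\ne$. That equality is exactly why the first term is $O(e^{-N\Omega(x)})$.
\item There is no ``up to sign'' in the correction term. Since $\psi^{-1}\overline{\psi}^{-2}=\psi/|\psi|^4$, the imaginary part is $+\im\psi/|\psi|^4$. This determined sign is what produces the minus sign in \eqref{eq. asymp 1-point func eGinOE cplx}, so you should derive it rather than match it to the target.
\item The sign check you flag is a one-line identity. Using $\im z=\im\psi\,(|\psi|^2-\tau)/|\psi|^2$ and $\re[\psi-\psi^{-1}]=\re\psi\,(|\psi|^2-1)/|\psi|^2$, the bracket equals $(1-\tau)|\psi|^2\im\psi/(|\psi|^2-1)$. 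The two terms have opposite signs, but the first dominates, so the bracket is positive for $\im z>0$.
\item Contrary to your expectation, the bracket does vanish on $\R\setminus E$. Uniformity of the relative error as $\im z\to0$ therefore needs one more observation, which the paper also passes over: the Hermite error terms are analytic and real on $\R$, so their imaginary parts are $O(\im z/N)$.
\end{itemize}
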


See Figure~\ref{Fig_numerics eGinOE} for a numerical verification of \eqref{eq. asymp 1-point func eGinOE cplx}. 

\begin{figure}[t]
    \begin{subfigure}{0.4\textwidth}
        \begin{center}
            \includegraphics[width=\linewidth]{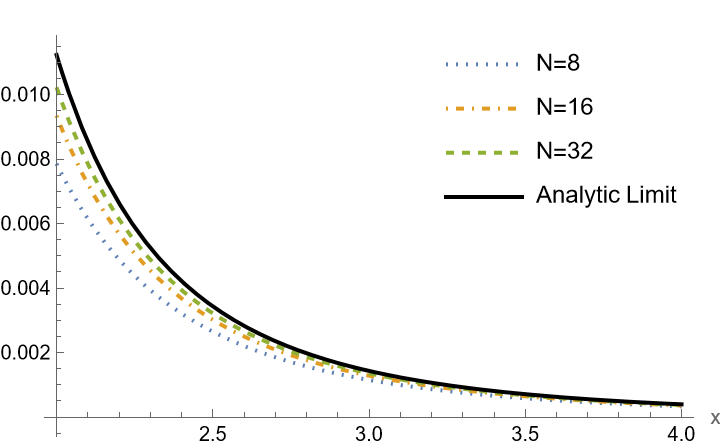}
        \end{center}
        \subcaption{ $z =x+i\,0.35$. }
    \end{subfigure}
    \qquad
    \begin{subfigure}{0.4\textwidth}
        \begin{center}
            \includegraphics[width=\linewidth]{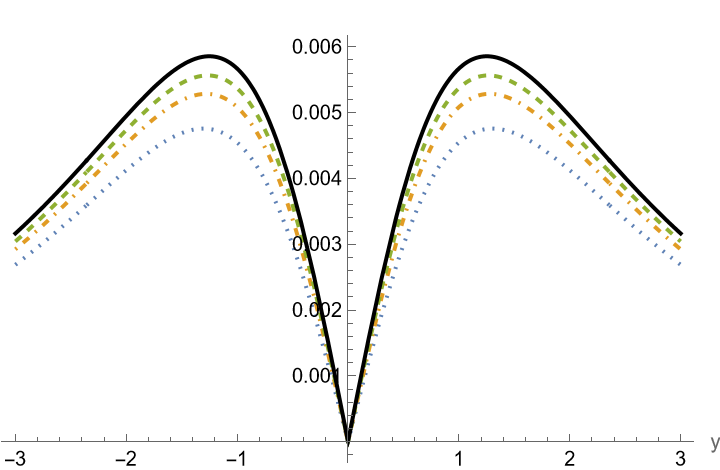}
        \end{center}
        \subcaption{ $z = 2.6+iy$. }
    \end{subfigure}
     \caption{ The same figure as in Figure~\ref{Fig_numerics eGinUE}, now for $R_N^{\R,\rmc}(z) / (N e^{2N\re g(z)}\omega_N^\R(z))$. } \label{Fig_numerics eGinOE}
\end{figure}

\begin{proof}[Proof of Proposition~\ref{Prop_eGinOE asymp}]
 By symmetries, we may assume $\re z, \im z \geq0$. By Lemma~\ref{lem. CD formula eGinOE S-N-c}, we have
    \begin{equation}
        S_N^\rmc(\overline{z},z) = \sqrt{ \frac{2 \pi N}{1-\tau^2}} \bigg( \im z \, K_{N,N-1}^\C(z,\overline{z}) - \tau \frac{\im[\phi_{N,N-1}(z) \phi_{N,N-2}(\overline{z})]}{ h_{N,N-2}^\C } \bigg).
    \end{equation}
   For the first term, $K_{N,N-1}^\C(z,\overline{z})$, we apply Lemmas~\ref{lem. CD eGinUE} and \ref{Hermite asymptotics}, which yields
    \begin{align*}
        \frac{\partial}{\partial x} \Big[ K_{N,N-1}(z,\overline{z}) \omega_N^\C(z) \Big] &= - \frac{2N}{1+\tau} \frac{1}{h_{N,N-2}^\C} \omega_N^\C(z) \re \Big[ \phi_{N,N-1}(z) \overline{\phi_{N,N-2}(z)} \Big]
        \\
        &= - \frac{N}{1+\tau} \sqrt{\frac{2N}{\pi^3(1-\tau^2)}} \frac{|\psi'(z)| \re[ \psi(z) ] }{ |\psi(z)|^4 } e^{- N \Omega(z)} \Big( 1 + O\Big( \frac{1}{N} \Big) \Big).
    \end{align*}
    Then, applying Laplace's method and writing $z = x+iy$ for $x,y\in\R$, we obtain 
    \begin{align*}
        y \,K_{N,N-1}(z,\overline{z})  &= - \frac{y}{ \omega_N^\C(z) }\int_x^\infty \frac{\partial}{\partial u} \Big[ K_{N,N-1}(u+i y,u+iy) \omega_N^\C(u+iy) \Big] \,du
        \\
        &= \frac{y}{1+\tau} \sqrt{\frac{2N}{\pi^3 (1-\tau^2)}} \frac{|\psi'(z)| \re \psi(z)}{ |\psi(z)|^4 \frac{\partial}{\partial x} \Omega(z)} e^{2N \re g(z)} \Big( 1 + O\Big( \frac{1}{N} \Big) \Big).
    \end{align*}
    For the second term, involving the product of Hermite polynomials, its asymptotics follow from straightforward computations using Lemma~\ref{Hermite asymptotics}, and we obtain
    \begin{equation*}
        \tau\frac{\im[\phi_{N,N-1}(z) \phi_{N,N-2}(\overline{z})]}{ h_{N,N-2}^\C }  = \tau\sqrt{\frac{N}{2\pi^3 (1-\tau^2)}}  \frac{|\psi'(z)| \im \psi(z)}{|\psi(z)|^4} e^{2N \re g(z)} \Big( 1 + O\Big(\frac{1}{N} \Big) \Big).
    \end{equation*}
    Combining the above asymptotics, it follows that 
    \begin{align*}
        S_N^\rmc(\overline{z},z) = \frac{N}{ \pi (1-\tau^2) }  \frac{|\psi'(z)|}{ |\psi(z)|^4} \bigg( \frac{2y}{1+\tau} \frac{\re \psi(z)}{\frac{\partial}{\partial x}\Omega(z)}  - \tau \im \psi(z) \bigg) e^{2N \re g(z)}.
    \end{align*}
    Substituting this into \eqref{eq. 1-point func R-N-R-rmc as S-N-rmc} together with \eqref{eq. partial-x omega}, we obtain \eqref{eq. asymp 1-point func eGinOE cplx}.
\end{proof}

Finally, we turn to the asymptotic behaviour of the one-point function for the eGinSE.

\begin{proposition}[\textbf{Asymptotics of the one-point function for the eGinSE}] \label{Prop_eGinSE asymp}
   Let $\tau \in [0,1)$ be fixed. Let $N$ be a positive integer and $z\in\C \setminus E$.
    Then as $N \to \infty$, we have
    \begin{equation}
        R_N^\HH(z) =  \frac{\im z }{1-\tau} \sqrt{ \frac{N}{\pi^3 (1-\tau^2)} } \frac{|\psi'(z)| }{ |\psi(z)^2-1|^2 } \frac{\im[\psi(z)^2]}{ \re [ \psi(z) - \psi(z)^{-1}] } e^{- 2N \Omega(z)} \Big(1+O\Big(\frac{1}{N}\Big) \Big),
    \end{equation}
    where $\psi$ and $\Omega$ are defined in \eqref{def of psi} and \eqref{def of Omega explicit}.
    The error term is uniform within a set bounded away from $E$.
\end{proposition}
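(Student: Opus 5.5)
The plan is to follow the same two-step strategy as in Propositions~\ref{Prop_eGinUE asymp} and \ref{Prop_eGinOE asymp}. First, Lemma~\ref{lem. CD formula for eGinSE kappa-N and E-N} reduces the pre-kernel $\kappa_N^\HH(z,\bar z)$ to the eGinUE kernel $K_{2N}^\C$ together with a rank-one term built from the top-degree polynomials. Then a Laplace-type integration recovers $\kappa_N^\HH$ itself from its derivative.

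Concretely, write $L_z:=\partial_z-\frac{2N}{1+\tau}z$, so that $L_z=e^{\frac{N}{1+\tau}z^2}\partial_z e^{-\frac{N}{1+\tau}z^2}$. Lemma~\ref{lem. CD formula for eGinSE kappa-N and E-N} then gives
\[
\partial_z\Big[e^{-\frac{N}{1+\tau}z^2}\kappa_N^\HH(z,w)\Big]=\frac{N}{1-\tau^2}e^{-\frac{N}{1+\tau}z^2}\big(K^\C_{2N}(z,w)-E_N(z,w)\big).
\]
Here $z$ and $w$ are treated as independent holomorphic variables, since $\kappa_N^\HH$ is a polynomial in both. The second identity of the lemma expresses $E_N$ in the same way, via $\partial_w[e^{-\frac{N}{1+\tau}w^2}E_N]$, in terms of $p^\C_{2N,2N}(z)p^\C_{2N,2N-1}(w)$.

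I would set $w=\bar z$ only at the end. To do so I would integrate along a path from $z$ to infinity in the $z$-variable, chosen so that the weight decays. Equivalently, I would use antisymmetry, $\kappa(z,w)=-\kappa(w,z)$, and integrate in a direction where $\re(z^2)$ increases. The integrands are asymptotically given by Lemma~\ref{Hermite asymptotics} for $E_N$ and by Proposition~\ref{Prop_eGinUE asymp}/Lemma~\ref{lem. CD eGinUE} for $K_{2N}^\C$. The leading factor of $K^\C_{2N}(z,\bar z)\omega^\HH_N(z)$ is $e^{-2N\Omega(z)}$, with $N\mapsto 2N$ in the eGinUE formulas.

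Both contributions are of the form amplitude times $e^{2N(\text{phase})}$ with a nondegenerate first derivative of the phase at the endpoint. Laplace's method at the endpoint therefore gives $\kappa_N^\HH(z,\bar z)\omega_N^\HH(z)$ as a sum of explicit terms multiplied by $e^{-2N\Omega(z)}(1+O(1/N))$. The two terms should combine in a way analogous to the eGinOE computation, where $\im z\,K$ and $\tau\,\im[\phi\phi]$ combined. Multiplying by $\bar z-z=-2i\im z$ should then produce the stated prefactor
\[
\frac{\im z}{1-\tau}\sqrt{\frac{N}{\pi^3(1-\tau^2)}}\,\frac{|\psi'|}{|\psi^2-1|^2}\,\frac{\im[\psi^2]}{\re[\psi-\psi^{-1}]}.
\]
Identities such as \eqref{eq. partial-x omega}, $\psi+\tau/\psi=z$, and $\psi'=\psi^2/(\psi^2-\tau)$ should help simplify it.

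The main obstacle is the double integration. $E_N$ is given only through its $w$-derivative, so one must first recover $E_N(z,\bar z)$ by integrating in $w$. One must then integrate the combination $K_{2N}^\C-E_N$ in $z$ while keeping the non-holomorphic pairing $w=\bar z$ consistent. Both tails must be shown to vanish, and the boundary terms must have the correct decay uniformly on sets bounded away from $E$.

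I would try the following. First write $\kappa_N^\HH(z,\bar z)\omega^\HH_N(z)$ as a function of $x=\re z$ for fixed $y$. Then differentiate in $x$ using $\partial_x=\partial_z+\partial_{\bar z}$ together with the antisymmetry, which turns $L_z$ and $L_{\bar z}$ contributions into $\re$ parts. Finally, integrate from $x$ to $+\infty$ exactly as in the proof of Proposition~\ref{Prop_eGinUE asymp}. The leftover $E_N$ piece would be handled by one further Laplace integration in $\bar z$, which costs a factor $1/(N\,\partial\Omega)$ and yields the denominator $|\psi^2-1|^2$. The key check is that the resulting leading coefficient does not cancel. I expect it to be proportional to $\im[\psi^2]$, which is positive for $\im z>0$.
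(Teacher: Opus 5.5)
Your final plan is exactly the paper's proof: differentiate $R_N^\HH$ in $x=\re z$ at fixed $y=\im z$, apply Lemma~\ref{lem. CD formula for eGinSE kappa-N and E-N} with antisymmetry, obtain $E_N(z,\bar z)$ by an endpoint Laplace integration in the second variable, then integrate $\partial_x R_N^\HH$ from $x$ to $+\infty$. However, the mechanism you describe for the middle step is not what happens. There is no eGinOE-type combination of a $K_{2N}^\C$ term with an $E_N$ term.

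Since $\omega_N^\HH(z)=e^{-\frac{N}{1+\tau}(z^2+\bar z^2)}e^{-\frac{4Ny^2}{1-\tau^2}}$, the derivative $\partial_x$ acts on $\omega_N^\HH(z)\kappa_N^\HH(z,\bar z)$ as $L_z+L_{\bar z}$ on $\kappa_N^\HH$. Because $\kappa_N^\HH$ is antisymmetric while $K_{2N}^\C(z,w)=K_{2N}^\C(w,z)$, the two $K_{2N}^\C$ contributions cancel identically, leaving
\[
\partial_x R_N^\HH(z)=\frac{N}{1-\tau^2}\,(\bar z-z)\,\omega_N^\HH(z)\Big(\overline{E_N(z,\bar z)}-E_N(z,\bar z)\Big).
\]
This is an imaginary part, not a real part. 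As a consistency check, $\kappa_N^\HH(z,\bar z)$ is purely imaginary. So a surviving real term $K_{2N}^\C(z,\bar z)\omega_N^\HH(z)$ cannot occur, even though by Proposition~\ref{Prop_eGinUE asymp} it would be of the same order after the $x$-integration and would corrupt the constant.

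Relatedly, your first variant (integrating in $z$ with $w=\bar z$ frozen) is not supported by the tools you cite. It needs off-diagonal asymptotics of $K_{2N}^\C(\eta,\bar z)$ for $\eta\neq z$, whereas Proposition~\ref{Prop_eGinUE asymp} and Lemma~\ref{lem. CD eGinUE} only concern the diagonal. The $x$-route avoids this precisely because $K_{2N}^\C$ drops out.

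With this corrected, your predictions are right, though the factors arise at different stages than you say. Integrate $\partial_w[E_N(z,w)e^{-\frac{N}{1+\tau}w^2}]$ along a horizontal ray. The phase derivative is $\frac{2(1-\psi(w)^2)}{(1+\tau)\psi(w)}$, whose real part equals $-\partial_x\Omega<0$ for $\re w>0$, so the endpoint dominates. This gives
\[
E_N(z,w)=\sqrt{\frac{N}{\pi^3(1-\tau^2)}}\,\sqrt{\psi'(z)}\,\frac{\sqrt{\psi'(w)}}{\psi(w)^2-1}\,e^{2N(g(z)+g(w))}\Big(1+O\Big(\frac1N\Big)\Big).
\]
At $w=\bar z$ one has $\im\frac{1}{\overline{\psi(z)}^2-1}=\frac{\im[\psi(z)^2]}{|\psi(z)^2-1|^2}$, which produces both $|\psi^2-1|^2$ and $\im[\psi^2]$. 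The factor $1/(2N\partial_x\Omega)$, i.e.\ the $\re[\psi-\psi^{-1}]$ in the denominator via \eqref{eq. partial-x omega}, comes from the final $x$-integration, not from the $w$-integration.
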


See Figure~\ref{Fig_numerics eGinSE} for the numerics. 

\begin{figure}[t]
    \begin{subfigure}{0.4\textwidth}
        \begin{center}
            \includegraphics[width=\linewidth]{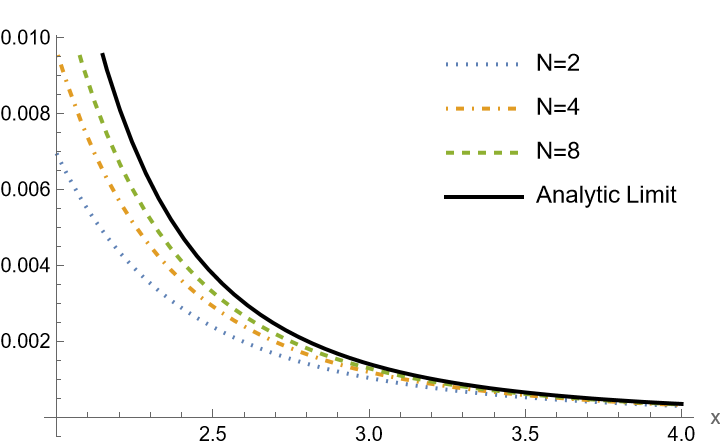}
        \end{center}
        \subcaption{ $z=x+i\,0.35$. }
    \end{subfigure}
    \qquad
    \begin{subfigure}{0.4\textwidth}
        \begin{center}
            \includegraphics[width=\linewidth]{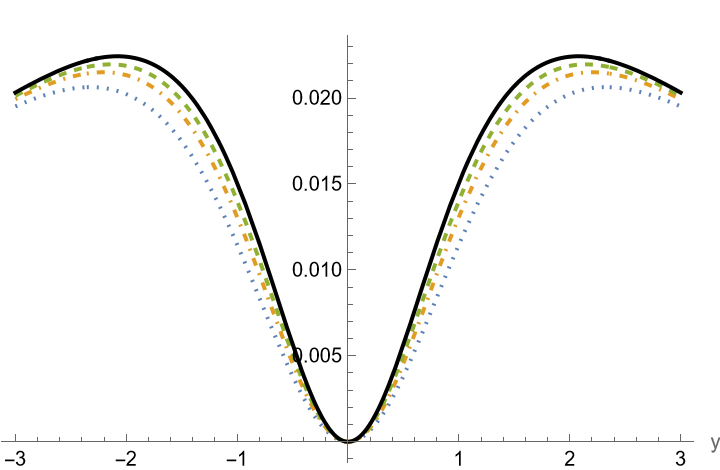}
        \end{center}
        \subcaption{ $z=2.6+iy$. }
    \end{subfigure}
     \caption{ The same figure as in Figure~\ref{Fig_numerics eGinUE}, now for $R_N^\HH(z) / (\sqrt{N}e^{-2N \Omega(z)})$. } \label{Fig_numerics eGinSE}
\end{figure}

\begin{proof}[Proof of Proposition~\ref{Prop_eGinSE asymp}]
    Let $x = \re z$ and $y = \im z$. By symmetry, we may assume $x \geq 0$.
Using Lemma~\ref{lem. CD formula for eGinSE kappa-N and E-N}, we have 
    \begin{align*}
        \frac{\partial}{\partial x} R_N^\HH(z) &=\frac{N}{1 - \tau^2} (\overline{z} - z)\omega_N^\HH(z) \Big(\overline{E_N(z, \overline{z})}-E_N(z, \overline{z})\Big).
    \end{align*}
    This reduces the problem to computing the asymptotics of $E_N(z,\overline{z})$.
Again by Lemma~\ref{lem. CD formula for eGinSE kappa-N and E-N}, we have 
    \begin{equation*}
        \frac{\partial}{\partial w}\Big[ E_N(z,w) e^{- \frac{N}{1+\tau} w^2} \Big] = - \frac{2N}{1+\tau} \frac{1}{h_{2N,2N}^\C} p_{2N,2N}^\C(z) p_{2N,2N-1}^\C(w) e^{- \frac{N}{1+\tau}w^2}.
    \end{equation*}
    We note that $E_N(z,w) e^{- \frac{N}{1+\tau} w^2}$ is an entire function that converges to $0$ as $\re w \to +\infty$.
Hence, by the method of steepest descent, we obtain 
    \begin{align*}
        E_N(z,w) e^{- \frac{N}{1+\tau} w^2} &= - \int_w^\infty \frac{\partial}{\partial \eta}\Big[E_N(z, \eta) e^{- \frac{N}{1+\tau}\eta^2}\Big] \, d\eta
        \\
        &= \frac{2N}{1+\tau} \sqrt{\frac{N}{\pi^3(1-\tau^2)}} \sqrt{\psi'(z)} e^{2N g(z)} \int_w^\infty \frac{\sqrt{\psi'(\eta)}}{\psi(\eta)} e^{2N(g(\eta) - \frac{1}{1+\tau}\eta^2)} d\eta \Big( 1+ O\Big( \frac{1}{N} \Big)\Big)
        \\
        &= \sqrt{\frac{N}{\pi^3(1-\tau^2)}} \sqrt{\psi'(z)} \frac{\sqrt{\psi'(w)} }{\psi(w)^2 -1} e^{2N (g(z) + g(w)) - \frac{N}{1+\tau} w^2} \Big( 1+ O\Big( \frac{1}{N} \Big)\Big).
    \end{align*}
    Here, the path of integration is chosen so that $\re \eta \to \infty$ while $\im \eta$ remains bounded.
Rearranging the terms yields 
    \begin{equation*}
        E_N(z, w) = \sqrt{ \frac{N}{\pi^3 (1-\tau^2)} } \sqrt{\psi'(z)} \frac{\sqrt{\psi'(w)}}{ \psi(w)^2-1} e^{2N (g(z)+ g(w))} \Big( 1+ O\Big( \frac{1}{N} \Big)\Big).
    \end{equation*}
    Substituting this into the above identity and applying Laplace's method, we obtain 
    \begin{align*}
        R_N^\HH(z) &= - \int_x^\infty \frac{\partial}{\partial u} R_N^\HH(u+ iy) \,du
        \\
        &= -\int_x^\infty \frac{4 N y}{1-\tau^2} \sqrt{\frac{N}{\pi^3 (1-\tau^2)}} \frac{|\psi'(u+iy)| \im[\psi(u+iy)^2]}{|\psi(u+iy)^2-1|^2} e^{-2N \Omega(u+iy)} du \Big( 1+ O\Big( \frac{1}{N} \Big)\Big)
        \\
        &= \frac{4N \im z }{1-\tau^2} \sqrt{\frac{N}{\pi^3(1-\tau^2)}} \frac{|\psi'(z)| \im[\psi(z)^2]}{ |\psi(z)^2-1|^2} \frac{1}{2N\frac{\partial}{\partial x}\Omega(z)} e^{-2N \Omega(z)} \Big( 1+ O\Big( \frac{1}{N} \Big)\Big), 
    \end{align*}
    which completes the proof. 
\end{proof}

\subsection{Proof of Theorems~\ref{Thm. LDP maxEV} and ~\ref{Thm. LDP maxEV_general domain}}

Using the asymptotic behaviour established in the previous section, we now prove Theorems~\ref{Thm. LDP maxEV} and ~\ref{Thm. LDP maxEV_general domain}. 
We first show the following.  

\begin{lemma} \label{Lem_omega for special U}
For $s > 1+\tau$, let $U = \{ z \in \mathbb{C} : |z| \ge s \}$ or $U = \{ z \in \mathbb{C} : \re z \ge s \}$. Then we have 
\begin{equation}
\inf_{ z \in U } \Omega(z)= \Omega(s). 
\end{equation}
\end{lemma}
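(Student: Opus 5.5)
The plan is to compute the gradient of $\Omega$ explicitly in the conformal coordinates supplied by $\psi$, and then read off monotonicity along the directions relevant to each of the two domains $U$. Since $\Omega = V - 2\re g$ and $2\re g$ is harmonic, we have $\partial_x(2\re g) - i\partial_y(2\re g) = 2\partial_z(2\re g) = 2g'(z)$. A direct differentiation of \eqref{def of g}, using $\psi^2 - z\psi + \tau = 0$, gives $g'(z) = 1/\psi(z)$. Also $2\partial_z V = 2(\bar z - \tau z)/(1-\tau^2)$.

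We parametrise $z \in E^c$ by $\psi(z) = r e^{i\theta}$ with $r > 1$, so that $z = \psi + \tau/\psi$. Concretely,
\begin{equation*}
x = \Big(r + \frac{\tau}{r}\Big)\cos\theta, \qquad y = \Big(r - \frac{\tau}{r}\Big)\sin\theta .
\end{equation*}
Substituting and separating real and imaginary parts, we expect
\begin{equation*}
\partial_x \Omega = \frac{2}{1+\tau}\Big(r - \frac{1}{r}\Big)\cos\theta, \qquad \partial_y \Omega = \frac{2}{1-\tau}\Big(r - \frac{1}{r}\Big)\sin\theta .
\end{equation*}
The first formula is consistent with \eqref{eq. partial-x omega}. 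Because $\psi$ maps the upper half-plane into itself and the right half-plane into itself, $\cos\theta$ has the sign of $x$ and $\sin\theta$ has the sign of $y$. We also record the symmetries $\Omega(\bar z) = \Omega(z) = \Omega(-z)$.

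For the half-plane $U = \{\re z \ge s\}$, fix $x \ge s$. By the formula for $\partial_y\Omega$, the map $y \mapsto \Omega(x+iy)$ is increasing in $|y|$, so $\Omega(x+iy) \ge \Omega(x)$. On the real half-line $x > 1+\tau$ we have $\theta = 0$, so $\partial_x \Omega > 0$, and hence $\Omega(x) \ge \Omega(s)$. Every point of $U$ lies in $E^c$ (indeed $\re z \ge s > 1+\tau$), so the formulas apply throughout $U$, and the infimum $\Omega(s)$ is attained.

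For $U = \{|z| \ge s\}$, we split into a radial step and an angular step.

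\emph{Radial step.} We compute
\begin{equation*}
\frac{d}{dt}\Omega(tz)\Big|_{t=1} = x\partial_x\Omega + y\partial_y\Omega = 2\Big(r - \frac{1}{r}\Big)\bigg[\frac{(r+\tau/r)\cos^2\theta}{1+\tau} + \frac{(r-\tau/r)\sin^2\theta}{1-\tau}\bigg],
\end{equation*}
which is strictly positive for $r > 1$. So $\Omega$ increases along rays in $E^c$. Since the circle $|z| = s$ lies outside $E$ (as $s > 1+\tau$), the infimum over $U$ is attained on this circle.

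\emph{Angular step.} For $z = s e^{i\varphi}$ we compute
\begin{equation*}
\partial_\varphi \Omega = -y\partial_x\Omega + x\partial_y\Omega = 2\Big(r - \frac{1}{r}\Big)\sin\theta\cos\theta\bigg[\frac{r+\tau/r}{1-\tau} - \frac{r-\tau/r}{1+\tau}\bigg].
\end{equation*}
The bracket is positive, and $\theta$ lies in the same quadrant as $\varphi$. Hence $\Omega$ increases in $\varphi$ on $[0,\pi/2]$, so the minimum on the quarter-circle is at $\varphi = 0$. By the symmetries, the minimum on the full circle is $\Omega(\pm s) = \Omega(s)$.

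The main obstacle is the algebraic verification of the two gradient formulas. This amounts to expanding
\begin{equation*}
2\partial_z\Omega = \frac{2}{1-\tau^2}\Big(\bar\psi - \tau\psi + \frac{\tau}{\bar\psi} - \frac{1}{\psi}\Big),
\end{equation*}
where we use $\bar z - \tau z = \bar\psi - \tau\psi + \tau/\bar\psi - \tau^2/\psi$ and add $-(1-\tau^2)/\psi$ from the $g'$ term. Taking real and imaginary parts then gives the stated expressions. A secondary check is the quadrant correspondence between $\theta$ and $\varphi$, or between $\theta$ and the signs of $x,y$. This follows because $\psi$ is real on the real axis beyond $2\sqrt{\tau}$, commutes with conjugation, and is odd, so it preserves each open quadrant of $E^c$.
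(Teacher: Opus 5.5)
Your proposal is correct and follows essentially the paper's route: both arguments compute $\partial_x\Omega$, $\partial_y\Omega$ and the angular derivative in the coordinate $u=\psi(z)$ and read off monotonicity. Your formulas agree with the paper's $\frac{2x}{1+\tau}\frac{|u|^2-1}{|u|^2+\tau}$ and $\frac{2y}{1-\tau}\frac{|u|^2-1}{|u|^2-\tau}$. Your radial-derivative step for $\{|z|\ge s\}$ reduces to the circle $|z|=s$ more cleanly than the paper's combination of horizontal/vertical monotonicity and symmetry. One small nit: the bracket in your angular derivative equals $\frac{2\tau(r+1/r)}{1-\tau^2}$, which vanishes at $\tau=0$, so \emph{increasing} should read \emph{non-decreasing}; this is harmless, since $\Omega$ is then radial.
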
 
\begin{proof}
Let $z=x+iy=re^{i\theta}$. Also denote
\begin{equation*}
    u:= \psi(z) = \frac{z+\sqrt{z^2-4\tau}}{2}, \qquad z=u+\frac{\tau}{u}.
\end{equation*}
Then, we have
\begin{equation} \label{xy in u}
    x=\re\Big(u+\frac{\tau}{u}\Big)=\frac{(|u|^2+\tau)\re u}{|u|^2}, \qquad
    y=\im\Big(u+\frac{\tau}{u}\Big)=\frac{(|u|^2-\tau)\im u}{|u|^2}
\end{equation}
and 
\begin{equation*}
    \Omega(z) = \frac{1}{1-\tau^2}\Big( |z|^2-\tau \re z^2\Big) -\re\Big(\frac{z}{u}+2\log u\Big).
\end{equation*}

First fix $y\in\R$ and suppose $x>x_0>1+\tau$. Differentiating $\Omega$ with respect to $x$ yields 
\begin{equation*}
    \frac{\partial}{\partial x}\Omega(z) = \frac{2x}{1+\tau}-2\re\frac{1}{u} = \frac{2x}{1+\tau}\frac{|u|^2-1}{|u|^2+\tau}, 
\end{equation*}
where we have used \eqref{xy in u}. 
Since $\psi:\C\setminus E\to\C\setminus\D$ is conformal and $x>1+\tau$, we have $|u|>1$. Hence 
\begin{equation} \label{Omega x axis}
    \inf_{x>x_0}\Omega(x+iy)=\Omega(x_0+iy), \qquad y\in\R.
\end{equation}
By symmetry, it also holds that $$\inf_{x<-x_0}\Omega(x+iy)=\Omega(-x_0+iy).$$ 

Next fix $x>1+\tau$ and differentiate $\Omega$ with respect to $y$:
\begin{equation*}
    \frac{\partial}{\partial y}\Omega(z) = \frac{2y}{1-\tau}+2\im\frac{1}{u} = \frac{2y}{1-\tau}\frac{|u|^2-1}{|u|^2-\tau}, 
\end{equation*}
where we have used \eqref{xy in u}. 
Since $|u|>1$, it follows that 
\begin{equation} \label{Omega y axis}
    \inf_{y\in\R}\Omega(x+iy) = \Omega(x), \qquad x>1+\tau.
\end{equation} 

On the other hand, for fixed $r>1+\tau$, we compute 
\begin{equation*}
    \frac{\partial}{\partial \theta}\Omega(z) = \Big(-y\frac{\partial}{\partial x}+x\frac{\partial}{\partial y}\Big)\Omega(z) = \frac{2\tau r^2\sin2\theta}{1-\tau^2}\frac{|u|^4-1}{|u|^4-\tau^2}.
\end{equation*}
Thus, we have
\begin{equation} \label{Omega angular}
    \inf_{\theta\in[0,2\pi)}\Omega(re^{i\theta})=\Omega(r)=\Omega(-r), \qquad r>1+\tau.
\end{equation}

For $U=\{z\in\C:\re z \ge s\}$, it follows from \eqref{Omega x axis} and \eqref{Omega y axis} that $\inf_{z\in U}\Omega(z)=\Omega(s)$.
For $U=\{z\in\C:|z| \ge s\}$, using \eqref{Omega x axis} and \eqref{Omega y axis}, together with symmetry to cover the left half-plane $\{x+iy:x<0\}$, we obtain
\[
\inf_{z\in U}\Omega(z)=\inf_{|z|=s}\Omega(z).
\]
Then \eqref{Omega angular} yields the desired result, completing the proof. 
\end{proof}

Deriving the leading-order asymptotics of the upper tail from the asymptotics of the one-point function is standard in random matrix theory; see e.g. \cite[Section~14.6.4]{Fo10}. 
Our argument follows a slight modification of the approach in \cite{XZ24}. 

\begin{proof}[Proof of Theorems~\ref{Thm. LDP maxEV} and ~\ref{Thm. LDP maxEV_general domain}] 
   Note that Theorem~\ref{Thm. LDP maxEV} is an immediate consequence of Theorem~\ref{Thm. LDP maxEV_general domain}, Lemma~\ref{Lem_omega for special U}, and \eqref{relation btw Phi and Omega}. 
Therefore, it suffices to prove Theorem~\ref{Thm. LDP maxEV_general domain}. 
    
    Let $\{z_j\}_{j=1}^N$ denote the eigenvalues of the elliptic Ginibre ensemble.
    Given a measurable set $U \subset \C$, we have the bounds
    \begin{align} \label{eq. ingredient 1 Thm LDP general domain}
        \frac{1}{N}\sum_{j=1}^N \P\Big[ z_j \in U \Big] \leq \P\Big[ \exists\, 1 \le j \le N \ \textup{such that } z_j \in U \Big]
        \leq
        \sum_{j=1}^N \P\Big[ z_j \in U \Big]. 
    \end{align}
   By the definitions \eqref{eq. def 1-point func UE and SE} and \eqref{eq. def 1-point func OE}, the upper and lower bounds can be rewritten as 
    \begin{align} \label{eq. ingredient 2 Thm LDP general domain}
        \sum_{j=1}^N\P\Big[ z_j \in U \Big] = \E \Big[ \sum_{j=1}^N \mathbbm{1}_U(z_j) \Big] = \begin{dcases}
            \int_{U\cap\R} R_N^{\R,\rmr}(x) \, dx + \int_{U \setminus \R} R_N^{\R,\rmc}(z) \,d^2z, & \text{for the eGinOE},
            \\
            \int_U R_N^{\C/\HH}(z) \,d^2z, & \text{for the eGinU/SE}.
        \end{dcases}
    \end{align}
    
    Now suppose that $U$ is bounded away from $E$.
Then, as $N\to\infty$, we claim that 
    \begin{align*}
        \int_{U\cap \R} R_N^{\R, \rmr}(x) \,dx = \exp\bigg[ - \frac{N}{2} \essinf_{x \in U\cap\R} \Omega(x) + o(N) \bigg],
        \qquad
        \int_{U\setminus \R} R_N^{\R, \rmc}(z) \,d^2z = \exp\bigg[ - N \essinf_{z \in U\setminus\R} \Omega(z) + o(N) \bigg]
    \end{align*}
    for the eGinOE, and
    \begin{align*}
        \int_{U} R_N^{\C/\HH}(z) \,d^2z = \exp\bigg[ - \frac{\beta N}{2} \essinf_{z \in U} \Omega(z) + o(N) \bigg]
    \end{align*}
    for the eGinU/SE, where $\beta$ is defined in \eqref{def of beta for eGinibres}.
   Then Theorem~\ref{Thm. LDP maxEV_general domain} follows directly from this claim together with \eqref{eq. ingredient 1 Thm LDP general domain} and \eqref{eq. ingredient 2 Thm LDP general domain}.

  It remains to prove the claim. We give the details only for the eGinUE. 
The cases of the eGinSE and eGinOE follow by the same arguments. 
Let $\mu$ denote the Lebesgue measure on $\C$.
Since the claim is trivial when $\mu(U)=0$, we assume that $\mu(U)\neq 0$. 
 Let $a := \essinf_{z \in U} \Omega(z)$ and, for $\epsilon>0$, set
\[
U_\epsilon := U \cap \Omega^{-1}[a,a+\epsilon).
\]
Choose $\epsilon_1>0$ arbitrarily and let $\delta := \mu(U_{\epsilon_1})>0$.
Next choose a decreasing sequence $\epsilon_N>0$ such that $\epsilon_N\to0$ and $\mu(U_{\epsilon_N})>\delta/N$.

   By construction, $U_{\epsilon_1}$ is bounded away from both $E$ and infinity.
Since $\psi$ is the Joukowsky transform from $\C\setminus E$ to $\C\setminus\D$, Proposition~\ref{Prop_eGinUE asymp} implies that there exists a constant $c>0$ such that 
    \begin{equation*}
        R_N^\C(z) \geq c \sqrt{N} e^{-N \Omega(z)}
    \end{equation*}
  for all $z\in U_{\epsilon_1}$. Therefore we obtain 
    \begin{align*}
        \int_U R_N^\C(z) \,d^2 z &\geq \int_{U_{\epsilon_N}} R_N^\C(z) \,d^2z
        \geq   \mu(U_{\epsilon_N} ) c\sqrt{N} e^{-N(a+\epsilon_N)} d^2z >  \frac{\delta c}{\sqrt{N}} e^{-N(a+\epsilon_N)},
    \end{align*}
    which gives rise to the lower bound
    \begin{align*}
        \int_{U} R_N^\C(z) \,d^2x \geq \exp\bigg[ - \frac{\beta N}{2} \essinf_{z \in U} \Omega(z) + o(N) \bigg].
    \end{align*}

    To prove the upper bound, choose $r\in(1+\tau,\infty)$ sufficiently large so that $\Omega(r)>a$ and $\mu(U\cap B_r)>0$, where $B_r$ denotes the ball of radius $r$ in $\C$ centred at the origin. 
Again by Proposition~\ref{Prop_eGinUE asymp}, there exists a constant $c'>0$ such that
    \begin{equation*}
        R_N^\C(z) \leq c' \sqrt{N} e^{-N \Omega(z)}
    \end{equation*}
    for all $z \in U \cap B_r$.
    Then, we have
    \begin{align*}
        \int_U R_N^\C(z) \,d^2z &\leq c' \int_U e^{-N \Omega(z)} \, d^2z 
         \leq c' \bigg[ \int_{U \cap B_r} + \int_{\C \setminus B_r} \bigg] e^{-N \Omega(z)} \, d^2z
        \\
        &\leq c' \bigg[ \pi r^2 e^{-N a} + 2\pi \int_r^\infty e^{-N \Omega(x)} \, dx \bigg]
        \leq c'' e^{-Na}
    \end{align*}
    for some constant $c''>0$.
    In the third inequality, we used that $\Omega(|z|) \le \Omega(z)$ for all $z \in \C \setminus B_r$, as shown in the proof of Lemma~\ref{Lem_omega for special U}. 
    
    Combining the above, as $N\to\infty$ we obtain
    \begin{align*}
        \int_{U} R_N^\C(z) \,d^2x \leq \exp\bigg[ - \frac{\beta N}{2} \essinf_{z \in U} \Omega(z) + o(N) \bigg]. 
    \end{align*}
    This proves the claim for the eGinUE and hence completes the proof of Theorem~\ref{Thm. LDP maxEV_general domain}. 
\end{proof}

\end{document}